\numberwithin{equation}{section}
\newtheorem{theorem}{Theorem}[section]
\newtheorem{lemma}[theorem]{Lemma}
\newtheorem{proposition}[theorem]{Proposition}
\newtheorem{corollary}[theorem]{Corollary}
\newtheorem{Maintheorem}{Main Theorem}
\newtheorem{question}[theorem]{Question}
\theoremstyle{definition}
\newtheorem{definition}[theorem]{Definition}
\theoremstyle{remark}
\newtheorem{remark}[theorem]{Remark}
\newtheorem{example}[theorem]{Example}
\newtheorem{acknowledgement}{Acknowledgement}
\newcommand{\Ass}{\operatorname{Ass}}
\newcommand{\Spec}{\operatorname{Spec}}
\newcommand{\Ht}{\operatorname{ht}}
\newcommand{\gr}{\operatorname{gr}}
\newcommand{\Supp}{\operatorname{Supp}}
\newcommand{\depth}{\operatorname{depth}}
\newcommand{\coker}{\operatorname{coker}}
\newcommand{\Pic}{\operatorname{Pic}}
\newcommand{\Sp}{\operatorname{Sp}}
\newcommand{\Reg}{\operatorname{Reg}}
\newcommand{\Sing}{\operatorname{Sing}}
\newcommand{\WN}{\operatorname{WN}}
\newcommand{\fm}{\frak{m}}
\newcommand{\fp}{\frak{p}}
\newcommand{\fq}{\frak{q}}
\newcommand{\fn}{\frak{n}}
\begin{document}
\title[Weak normality and seminormality in the mixed characteristic case]
{Weak normality and seminormality in the mixed characteristic case}

\author[J. Horiuchi]{Jun Horiuchi}
\address{Department of Mathematics, Nippon Institute of Technology, Miyashiro, Saitama 345-8501, Japan}
\email{jhoriuchi.math@gmail.com}

\author[K. Shimomoto]{Kazuma Shimomoto}
\address{Department of Mathematics, College of Humanities and Sciences, Nihon University, Setagaya-ku, Tokyo 156-8550, Japan}
\email{shimomotokazuma@gmail.com}

\thanks{2020 {\em Mathematics Subject Classification\/}: 13E05, 13F45, 13J10, 13N05}

\keywords{Bertini theorem, seminormal rings, weakly normal rings}


\begin{abstract}
In this article, we study certain properties of Noetherian rings with weak normality and seminormality in mixed characteristic. It is known that the two concepts can differ in the equal prime characteristic case, while they coincide by definition in the equal characteristic zero case. We exhibit some examples in the mixed characteristic case. We also establish the local Bertini theorem for weak normality in mixed characteristic under a certain condition.
\end{abstract}

\maketitle

\section{Introduction}

The purpose of this article is to fill in the gaps in the literature on weak normality and seminormality on commutative rings in the mixed characteristic case. Despite the fact that these notions are defined in a characteristic-free manner, it seems that there are not many articles that pay special attention to the problem of arithmetic variations of weak normality or seminormality in the geometric setting. For instance, we are motivated by the following question.

\begin{question}
\label{geometric}
Let $f:X \to \Spec(V)$ be a flat surjective scheme map of finite type, where $V$ is a Dedekind domain of mixed characteristic. Suppose that the generic fiber of $f$ has weakly normal (or seminormal) singularities. Then what can one say about the (general) closed fibers of $f$?
\end{question}

Conversely, one can ask about the singularities of the generic fiber of $f$ by requiring some closed fiber (or general closed fibers) of $f$ to possess weak normal (or seminormal) singularities. Let us emphasize that one can offer good answers to this converse question by relating it to the \textit{deformation property} of weakly normal (or seminormal) singularities. For the precise statement, we refer the reader to \cite{He08} for the deformation of seminormality, and to \cite[Proposition 4.11]{Mu20} for the deformation of weak normality. The authors think that the study of arithmetic variations of singularities is becoming of immediate importance, as the recent breakthrough paper \cite{MS19} has established a bridge connecting $F$-singularities and singularities appearing in the Minimal Model Program. Moreover, the singularities that are discussed in \cite{MS19} are not necessarily normal, and often fall into the types of singularities treated in the present article (a typical situation is that the generic fiber could be Du Bois, and some closed fiber could be $F$-injective. These singularities are not necessarily normal, but weakly normal).  Let us state the first main theorem; see Theorem \ref{weaknormalBertini}.

\begin{Maintheorem}
Let $(V,\pi,k)$ be an unramified complete discrete valuation ring of mixed characteristic with residue field of characteristic $p>0$. Suppose that $(R,\fm,k)$ is a $V$-flat complete local domain which induces an isomorphism on residue fields $k \cong V/(\pi) \cong R/\fm$ and that the following conditions hold:
\begin{enumerate}
\item
$R \to \overline{R}$ is unramified in codimension $1$, where $\overline{R}$ is the integral closure of $R$ in the field of fractions of $R$;

\item
$x_0,\ldots,x_d$ is a fixed set of minimal generators of $\fm$;

\item
the residue field $k$ is infinite.
\end{enumerate}

Then there exists a non-empty Zariski open subset $\mathcal{U} \subset \mathbb{P}^{d}(k)$ such that for a fixed $\alpha=(\alpha_0:\cdots:\alpha_d) \in \Sp^{-1}_V(\mathcal{U})$, the localization $(R/\mathbf{x}_{\widetilde{\alpha}}R)_\fp$ is a weakly normal local reduced ring of mixed characteristic for every $\fp \in \WN(R) \cap V(\mathbf{x}_{\widetilde{\alpha}}) \cap \Spec^\circ(R)$, where we put
$$
\mathbf{x}_{\widetilde{\alpha}}=\sum_{i=0}^d \widetilde{\alpha}_i x_i.
$$
\end{Maintheorem}

The second result is the following; see Example \ref{notweaklynormal} and Example
\ref{weaklynormalR_1} respectively.

\begin{Maintheorem}
The following assertions hold:

\begin{enumerate}
\item
There is a local Noetherian domain $(R,\fm)$ of mixed characteristic such that $R$ is seminormal, but not weakly normal.

\item
There is a local Noetherian domain $(R,\fm)$ of mixed characteristic such that $R$ is weakly normal with Serre's $(R_1)$-condition, but not normal.
\end{enumerate}
\end{Maintheorem}

Although the main results in this paper do not provide direct answers to Question \ref{geometric}, the authors hope that they will motivate the readers to develop methods for studying the arithmetic properties of singularities.

\section{Weak normality and seminormality}

In this section, let us recall briefly definitions of weak normalization and seminormalization. Throughout this article, rings are commutative with unity. The theory of seminormal rings arises from the following question: Let $A$ be a commutative ring. When are the Picard group of $A$ and the Picard group of the polynomial ring over $A$ equal to each other? Traverso \cite{T70} and Hamann \cite{H75} have answered this question as follows.

\begin{theorem}
\label{Traverso}
Let $A$ be a reduced Noetherian commutative ring with total quotient ring $Q(A)$. Then the following conditions are equivalent.

\begin{enumerate}
\item
$\Pic(A)=\Pic(A[X])$.

\item
$\Pic(A)=\Pic(A[X_1,\ldots,X_n])$ for all $n\in \mathbb{N}$.

\item
$A$ is seminormal in $Q(A)$.

\item
If $a\in Q(A)$ and $a^2, a^3 \in A$, then $a\in A$.
\end{enumerate}
\end{theorem}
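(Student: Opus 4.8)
The plan is to prove the cycle $(2)\Rightarrow(1)\Rightarrow(3)\Leftrightarrow(4)$ together with $(3)\Rightarrow(2)$, routing every Picard-group computation through a single device: the relative (or ``new'') Picard group. First, $(2)\Rightarrow(1)$ is immediate, and $(3)\Leftrightarrow(4)$ is essentially a restatement of the definition, since condition (4) says exactly that $A$ equals its seminormalization ${}^{+}A$ inside $Q(A)$; indeed ${}^{+}A$ is generated over $A$ by the elements $a\in Q(A)$ with $a^{2},a^{3}\in A$. So all the genuine content is in linking (1) to seminormality and in propagating the one-variable statement to $n$ variables.

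Next I would set up the main device. Because $A\hookrightarrow A[X]$ admits the retraction $X\mapsto 0$, the induced map $\Pic(A)\to\Pic(A[X])$ is a split injection, so $\Pic(A[X])=\Pic(A)\oplus\operatorname{NPic}(A)$ with $\operatorname{NPic}(A):=\ker(\Pic(A[X])\to\Pic(A))$; thus (1) is equivalent to $\operatorname{NPic}(A)=0$. To compute $\operatorname{NPic}(A)$ I would pass to the conductor square: let $\overline{A}$ be the integral closure of $A$ in $Q(A)$ and $\fc=(A:\overline{A})$ the conductor, giving the cartesian square with vertices $A,\overline{A},A/\fc,\overline{A}/\fc$. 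Polynomial extension preserves this square, with conductor $\fc[X]$ and integral closure $\overline{A}[X]$, so Milnor's units--Picard Mayer--Vietoris sequence, applied to $A[X]$ and compared against the constant square via $X\mapsto 0$, yields an exact sequence of the corresponding ``new'' functors. Using that units of a polynomial ring over a reduced ring are constant (so $\operatorname{NU}(\overline{A})=0$) and that $\operatorname{NPic}(\overline{A})=0$ for the normal ring $\overline{A}$ (classical, e.g.\ via $\Cl(\overline{A}[X])=\Cl(\overline{A})$ for the Krull ring $\overline{A}$), the sequence expresses $\operatorname{NPic}(A)$ through the new unipotent units of $\overline{A}/\fc$ modulo those coming from $A/\fc$, together with a correction term from the lower-dimensional quotients $A/\fc$ and $\overline{A}/\fc$.

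The core of the proof is then the dictionary between seminormality and this unit computation: $A$ is seminormal in $Q(A)$ if and only if every unipotent unit $1+\overline{c}_{1}X+\cdots$ over $\overline{A}/\fc$ descends to $A/\fc$, which is precisely the vanishing of the leading cokernel in the sequence above. This simultaneously gives $(1)\Rightarrow(3)$ (a non-seminormal $A$ produces a nonzero class in $\operatorname{NPic}(A)$, contradicting $\Pic(A)=\Pic(A[X])$) and the one-variable half of $(3)\Rightarrow(1)$. For the full force of (2) I would induct on the number of variables: writing $\Pic(A[X_{1},\dots,X_{n}])=\Pic(A[X_{1},\dots,X_{n-1}])\oplus\operatorname{NPic}(A[X_{1},\dots,X_{n-1}])$ in the last variable, it suffices to know that $A$ seminormal forces $A[X_{1},\dots,X_{n-1}]$ seminormal and to apply the one-variable result at each stage.

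The hardest part will be making the seminormality-to-units dictionary precise and disposing of the auxiliary terms. Concretely, two technical points carry the weight: (a) proving that $\operatorname{NPic}(A)=0$ is \emph{equivalent} to, not merely implied by, seminormality, which requires controlling the contributions of $\operatorname{NPic}(A/\fc)$ and $\operatorname{NPic}(\overline{A}/\fc)$ in the Mayer--Vietoris sequence, most cleanly by an induction on $\dim A$ that handles these non-reduced, lower-dimensional pieces; and (b) the stability of seminormality under adjunction of a variable, needed to run the induction for the $n$-variable statement. A further subtlety is that for a general reduced Noetherian $A$ the integral closure $\overline{A}$ need not be module-finite over $A$; I would circumvent this by working with the seminormalization ${}^{+}A$ in place of $\overline{A}$ and reducing every computation to finitely generated subextensions, where the conductor-square machinery applies verbatim.
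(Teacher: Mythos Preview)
The paper does not prove this theorem; it is stated as background (Theorem~\ref{Traverso}) and attributed to Traverso \cite{T70} and Hamann \cite{H75}, with the remark that the finite-normalization hypothesis was removed by Gilmer--Heitmann \cite{GH80}. There is therefore no proof in the paper to compare your proposal against.

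That said, your outline is a faithful sketch of the classical argument as it appears in the cited literature, especially Swan's treatment \cite{S80}: the conductor-square/Mayer--Vietoris computation of $\operatorname{NPic}(A)$, the vanishing of $\operatorname{NPic}$ over the normalization, the dictionary between seminormality and descent of unipotent units over the conductor quotient, and the induction on the number of variables are exactly the ingredients used there. Your identification of the two delicate points---controlling the lower-dimensional terms $\operatorname{NPic}(A/\fc)$ and $\operatorname{NPic}(\overline{A}/\fc)$, and handling the case where $\overline{A}$ is not module-finite---is accurate; the first is what forces the dimension induction (and is where Hamann's contribution enters), and the second is precisely what Gilmer--Heitmann \cite{GH80} supplied. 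One small correction: for the equivalence $(3)\Leftrightarrow(4)$ you should be a bit more careful, since the statement that ${}^{+}A$ is generated by elements with $a^2,a^3\in A$ is itself a nontrivial lemma (one must iterate, not take a single step), but this is standard and appears in \cite{S80}.
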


In \cite{T70}, it was assumed that $A$ had finite normalization in $Q(A)$, but this assumption was later eliminated by Gilmer-Heitmann in \cite{GH80}. On the other hand, the study of weakly normal rings has its roots in investigating weakly normal varieties over the complex numbers, due to Andreotti-Bombieri \cite{AB69}. Let $A \subset B$ be an integral extension of commutative rings and consider the subrings of $B$:
$$
A_B^+=\Big\{b \in B~\Big|~\frac{b}{1} \in A_{\fp}+J(B_{\fp})~\mbox{for all}~\fp \in \Spec(A)\Big\}
$$
and
$$
A_B^*=\Big\{b \in B~\Big|~\mbox{for all}~\fp \in \Spec(A),~\mbox{there exists}~n \in \mathbb{N}~\mbox{such that}~{\Big(\frac{b}{1}\Big)}^{p^n} \in A_{\fp}+J(B_{\fp})\Big\},
$$
where the symbol $J(A)$ denotes the Jacobson radical of the ring $A$ and $p$ is the characteristic exponent of the field $k(\fp)=A_{\fp} /\fp A_{\fp}$. We say that $A_{B}^+$ is the \textit{seminormalization} of $A$ in $B$, and $A_B^*$ is the \textit{weak normalization} of $A$ in $B$. Recalling that the characteristic exponent of an equal characteristic zero field is $1$, obviously $A_B^+ \subseteq A_B^*$ with equality in the equal characteristic zero case. Let $Q(A)$ be the total quotient ring of $A$. In the case that $B$ is the normalization of $A$ in $Q(A)$, we write $A^+$ instead of $A_B^+$ (resp. $A^*$ instead of $A_B^*$). If $A^+=A$, then we say that $A$ is \textit{seminormal}, and if $A^*=A$, then we say that $A$ is \textit{weakly normal}. From the definition, we have the implications: normal $\Rightarrow$ weakly normal $\Rightarrow$ seminormal.

Swan redefined the definition of seminormal rings modifying the characterization by square-cubic Theorem \ref{Traverso}(4) without mentioning an extension of rings. It is common to adopt the generalized definitions due to Swan \cite{S80} and Yanagihara \cite{Y85}. We will limit our attention to only Noetherian rings, although the definition makes sense over general commutative rings.

\begin{definition}
Let $A$ be a commutative Noetherian ring. 
\begin{enumerate}
\item[\rm{(i)}]
For any elements $b,c \in A$ with $b^3=c^2$, there exists an element $a \in A$ satisfying $b=a^2, c=a^3$.

\item[\rm{(ii)}]
For any elements $b,c,e \in A$ and any non-zero divisor $d \in A$ with $c^p=bd^p$ and $pc=de$ for some prime integer $p>0$, there is an element $a \in A$ with $b=a^p$ and $e=pa$.
\end{enumerate}

We call a ring $A$ which satisfies the condition $\rm{(i)}$ \textit{seminormal in the sense of Swan}, and which satisfies both conditions $\rm{(i)}$ and $\rm{(ii)}$ \textit{weakly normal in the sense of Yanagihara}.
\end{definition}

\begin{remark}
If the ring is seminormal in the sense of Swan, then it is seminormal in the original sense and the reverse implication holds when the ring is reduced. In the same way, if the ring is weakly normal in the sense of Yanagihara, then it is weakly normal in the original sense and the reverse implication holds when the ring is reduced. Notice that the condition $\rm{(i)}$ implies that the ring is necessarily reduced. For the proof of these facts, we refer the reader to \cite{S80} and \cite{Y85}.
\end{remark}

We close this section by introducing one characterization of weakly normal rings. We use this characterization in the proof of the main theorem; see \cite[Theorem 1.6]{M80} for the proof.

\begin{theorem}[Manaresi]
\label{characterization}
Let $R\subseteq S$ be an integral extension of commutative Noetherian rings. Then $R$ is weakly normal in $S$ if and only if the sequence of $R$-modules:
$$
R \to S \overset{f}{\underset{g}\rightrightarrows} (S \otimes_{R}S)_{\rm{red}}
$$
is exact, where $f(b)=b \otimes 1 \pmod{\sqrt{0}}$ and $g(b)=1 \otimes b \pmod{\sqrt{0}}$. That is, $R$ is isomorphic to the equalizer of $S \overset{f}{\underset{g}\rightrightarrows} (S  {\otimes}_{R}S)_{\rm{red}}$.
\end{theorem}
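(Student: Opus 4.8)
The plan is to prove the sharper statement that the equalizer $E=\{s\in S : s\otimes 1\equiv 1\otimes s \pmod{\sqrt{0}}\}$ of $f$ and $g$ coincides with the weak normalization $R^{*}_{S}$ of $R$ in $S$. Granting this, the theorem is immediate: one always has $R\subseteq E$, since $r\otimes 1=1\otimes r$ in $S\otimes_R S$ for every $r\in R$, so exactness of the displayed sequence (i.e. $R$ being the equalizer) is the same as $R=E=R^{*}_{S}$, which is precisely the assertion that $R$ is weakly normal in $S$. The computation rests on one field-theoretic lemma: for an algebraic extension $K/k$ and $\alpha\in K$, the element $\alpha\otimes 1-1\otimes\alpha$ is nilpotent in $K\otimes_k K$ if and only if $\alpha$ is purely inseparable over $k$, that is $\alpha^{p^n}\in k$ for some $n$, where $p$ is the characteristic exponent of $k$. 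I would prove this by restricting to $k(\alpha)\otimes_k k(\alpha)\cong k(\alpha)[x]/(m(x))$, where $m$ is the minimal polynomial of $\alpha$; there $\alpha\otimes1-1\otimes\alpha$ becomes $\alpha-x$, which is nilpotent exactly when $m(x)$ is a power of $(x-\alpha)$, i.e. when $\alpha$ is purely inseparable.

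For the inclusion $R^{*}_{S}\subseteq E$, take $s\in R^{*}_{S}$ and an arbitrary prime $\fq\subset S\otimes_R S$; it suffices to show $s\otimes1-1\otimes s\in\fq$, which yields nilpotency after ranging over all $\fq$. Put $\fp=\fq\cap R$ and let $\fq_1,\fq_2$ be the contractions of $\fq$ along the two canonical maps $S\to S\otimes_R S$, both lying over $\fp$. Since $s\in R^{*}_{S}$, one may write $s^{p^n}=\rho+j$ with $\rho\in R_\fp$ and $j\in J(S_\fp)$; as $J(S_\fp)$ is contained in the maximal ideals $\fq_1 S_\fp$ and $\fq_2 S_\fp$, the images of $s$ under the two maps $S\to\kappa(\fq)$ have the same $p^n$-th power, namely the image of $\rho$. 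Because $\kappa(\fq)$ is a field whose characteristic exponent is again $p$, equality of $p^n$-th powers forces equality, so $s\otimes1-1\otimes s\in\fq$.

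For the reverse inclusion $E\subseteq R^{*}_{S}$, fix $\fp\in\Spec(R)$ and localize; since localization carries nilpotents to nilpotents and $R_\fp\otimes_R(S\otimes_R S)\cong S_\fp\otimes_{R_\fp}S_\fp$, the element $s\otimes1-1\otimes s$ is nilpotent in $S_\fp\otimes_{R_\fp}S_\fp$, say of exponent $N$. The ring $S_\fp$ is integral over the local ring $R_\fp$ and Noetherian, hence has only finitely many maximal ideals $\fm_1,\dots,\fm_r$, with residue fields $K_i$ algebraic over $k=\kappa(\fp)$; write $\bar s_i$ for the image of $s$ in $K_i$. For each pair $(i,j)$ the quotient map $S_\fp\otimes_{R_\fp}S_\fp\twoheadrightarrow K_i\otimes_k K_j$ carries $s\otimes1-1\otimes s$ to the nilpotent $\bar s_i\otimes1-1\otimes\bar s_j$. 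Applying the lemma to the diagonal pairs shows each $\bar s_i$ is purely inseparable over $k$, and since $N$ bounds the inseparability degrees uniformly, a single $n$ satisfies $\bar s_i^{p^n}\in k$ for all $i$. Raising the off-diagonal nilpotents to the $p^n$-th power gives $(\bar s_i^{p^n}-\bar s_j^{p^n})\cdot 1$ nilpotent in the nonzero ring $K_i\otimes_k K_j$, hence $\bar s_i^{p^n}=\bar s_j^{p^n}=:c\in k$; lifting $c$ to $R_\fp$ then shows $s^{p^n}\in R_\fp+J(S_\fp)$, so $s\in R^{*}_{S}$.

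The main obstacle I anticipate is the careful characteristic bookkeeping in the mixed characteristic setting: although $R$ and $S$ need not have any single characteristic, every fiber over $\fp$ and every residue field $\kappa(\fq)$ sits in the characteristic exponent $p$ of $\kappa(\fp)$, which is exactly what legitimizes the Frobenius identities $(\,\cdot\,)^{p^n}$ used on both sides. The one genuinely delicate point is extracting a uniform exponent $n$ from the single nilpotency order $N$, so that the same power works simultaneously for all residue fields $K_i$; this, together with the minimal-polynomial characterization of nilpotency in $K\otimes_k K$, is the technical heart of the argument.
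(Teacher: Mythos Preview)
The paper does not give its own proof of this theorem; it simply cites Manaresi's original article \cite[Theorem 1.6]{M80}. So there is no in-paper argument to compare against.

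Your argument is correct and is essentially the standard one. A couple of small comments. First, the semi-locality of $S_\fp$ deserves a word: you invoke it by saying $S_\fp$ is ``integral over the local ring $R_\fp$ and Noetherian, hence has only finitely many maximal ideals.'' The clean justification is that $S_\fp/\fp S_\fp$ is Noetherian of Krull dimension $0$ (being integral over the field $\kappa(\fp)$), hence Artinian with finitely many maximal ideals; integrality forces every maximal ideal of $S_\fp$ to contain $\fp S_\fp$. Second, the ``genuinely delicate point'' you flag---extracting a uniform exponent $n$---is in fact harmless: since there are only finitely many residue fields $K_1,\ldots,K_r$, you may simply take $n=\max_i n_i$; no appeal to the global nilpotency order $N$ is needed. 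Your bound via $N$ works too, but the finiteness of the index set already suffices. With these clarifications your proof goes through, and it matches the strategy one finds in Manaresi: identify the equalizer with the weak normalization by reducing, prime by prime, to the field-theoretic statement that $\alpha\otimes 1-1\otimes\alpha$ is nilpotent in $K\otimes_k K$ precisely when $\alpha$ is purely inseparable over $k$.
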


Let $R \to S$ be a ring homomorphism. Then we say that $R \to S$ is \textit{unramified in codimension $1$} if the localization map $R_\fp \to S_\fp$ is unramified for all height-$1$ primes $\fp \in \Spec(R)$.

\section{An example of a weakly normal ring in mixed characteristic}

We present an example of a local ring in mixed characteristic that is seminormal, but not weakly normal. Examples of this type do not seem to abound in the existing literature.

\begin{example}
\label{notweaklynormal}
Let $\mathbb{Z}_2$ be the ring of $2$-adic integers, and let $\mathbb{Z}_2[[X,Y]]$ be the ring of formal power series ring in indeterminates $X$ and $Y$. We put
$$
A:=\mathbb{Z}_2[[X,Y]]/(Y^2-4X).
$$ 
Then $A$ is a $2$-dimensional local domain of mixed characteristic. We write $x,y$ for the images of $X,Y$, respectively. Consider the natural injection into the field of fractions $Q(A)=\mathbb{Q}_2((y))$
$$
A=\mathbb{Z}_2[[X,Y]]/(Y^2-4X) \hookrightarrow Q(A)=\mathbb{Q}_2((y)).
$$
We show that $A$ is seminormal, but not weakly normal. We check that $A$ is seminormal. 
In view of \cite[Corollary 2.7]{GT80}, it suffices to check that $A$ satisfies $(S_2)$, which it clearly does, and that $A_{\fp}$ is seminormal for each height-$1$ prime $\fp$. If $2 \notin \fp$, then $A_{\fp}$ is regular and there nothing to prove. So let us assume $2 \in \fp$. Then we get $\fp=(2,y)$ which is the only height-one prime containing $2$. After completing, we get
$$
\widehat{A_\fp} \cong V[[T]]/(T^2-4u),
$$
where $V$ is an unramified complete DVR and $u \in V[[T]]^{\times}$ is a unit, which is obtained as follows. The residue field of $\widehat{A_\fp}$ is isomorphic to $\mathbb{F}_2((u))$. Here, $u$ is the image of $X$ under the map $A \to \widehat{A_\fp}$. Let $\fm$ be the maximal ideal of $\widehat{A_\fp}$. Then $e(\widehat{A_\fp})=2=\dim_{\mathbb{F}_2((u))} (\fm/\fm^2)$. Moreover,
$$
\gr_{\fm}(\widehat{A_\fp}) \cong \mathbb{F}_2((u))[S,T]/(S^2-T^2u)
$$
and $u \in \mathbb{F}_2((u))$ does not admit a square root and therefore, 
$\gr_{\fm}(\widehat{A_\fp})$ is reduced. By Davis' result \cite{D78}, we see that $\widehat{A_\fp}$ is seminormal and so is $A_\fp$ by \cite[Corollary 5.3]{GT80}.

Next we check this ring is not weakly normal. Let us take $\frac{y}{2} \in Q(A)$. Then it is easy to check, $2 (\frac{y}{2})=y \in A$, $(\frac{y}{2})^2=\frac{y^2}{4}=\frac{4x}{4}=x \in A$, and  $\frac{y}{2}$ is not contained in $A$. Therefore, our ring is not weakly normal. 
\end{example}

We want to include a second proof of Example \ref{notweaklynormal} due to Karl Schwede which uses the method of pull-backs. The authors would like to thank him for permitting us to include this result. Before stating the result, let us define one term. Let $A$ be a ring of characteristic $p>0$, and let $B$ be a subring of $A$. We say that the ring extension $B\subset A$ is \textit{generically purely inseparable} if, for any $a\in A$, there exists an integer $e\geq 0$ such that $a^{p^e}\in B$.

\begin{theorem}[K. Schwede]
\label{pull-back}
Let $R$ be a Noetherian normal local domain of mixed characteristic with residue field of characteristic $p>0$ and let $I\subset R$ be any proper ideal such that $R/I$ is reduced. Assume that $B$ is a seminormal Noetherian subring of $R/I$ such that $B \to R/I$ is module-finite and generically purely inseparable. Then the pull-back of the diagram $R\to R/I \leftarrow B$ is a Noetherian seminormal local domain of mixed characteristic with residue field of characteristic $p>0$, which is not weakly normal. 
\end{theorem}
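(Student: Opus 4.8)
The plan is to realize the pull-back concretely as the preimage $A=\{r\in R:\bar r\in B\}$ of $B$ under the quotient map $R\to R/I$, where $\bar r$ denotes the image of $r$ in $R/I$. Then $I\subseteq A$, the quotient induces $A/I\cong B$, and from the short exact sequence $0\to A/I\to R/I\to R/A\to 0$ of $A$-modules together with the module-finiteness of $B\to R/I$ one sees that $R$ is a finitely generated $A$-module, in particular integral over $A$. The basic structural assertions then follow formally: $A$ is a domain because $A\subseteq R$ with $R$ a domain; $A$ is Noetherian by the Eakin--Nagata theorem, since $R$ is a module-finite Noetherian $A$-algebra; and $A$ is local with maximal ideal $\fm\cap A$, because $R$ is local and integral over $A$, so by lying over every maximal ideal of $A$ is the contraction of the unique maximal ideal $\fm$ of $R$. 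For the characteristic, note that since $R/I$ has characteristic $p$ we must have $p\in I\subseteq A$ (this is where $I\neq0$ enters), so $p$ is a nonzero element of $\fm\cap A$; hence $A$ is of mixed characteristic, and its residue field embeds in $R/\fm$ and thus has characteristic $p$.

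Next I would verify seminormality through Swan's criterion (i), which is legitimate since $A$ is reduced. Given $b,c\in A$ with $b^3=c^2$, I use that the normal domain $R$ is seminormal to produce $a\in R$ with $a^2=b$ and $a^3=c$ (explicitly $a=c/b$ when $b\neq0$, which is integral over $R$ and hence lies in $R$). It remains to show $a\in A$, i.e. $\bar a\in B$. Passing to $R/I$ we have $\bar a^2=\bar b$ and $\bar a^3=\bar c$ in $B$, and applying the seminormality of $B$ to the pair $(\bar b,\bar c)$ yields $\alpha\in B$ with $\alpha^2=\bar a^2$ and $\alpha^3=\bar a^3$. The key elementary fact is that in any reduced ring the relations $u^2=v^2$ and $u^3=v^3$ force $u=v$: writing $t=u-v$ one computes $t^2=2ut$ and $u^2t=u^3-u^2v=v^3-u^2v=0$, whence $t^3=2ut^2=4u^2t=0$ and $t=0$. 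Applying this with $u=\bar a$, $v=\alpha$ gives $\bar a=\alpha\in B$, so $a\in A$ and $A$ is seminormal.

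The substantive part is to show $A$ is not weakly normal, and here both remaining hypotheses are used. First I note that $R$ is exactly the normalization $\overline A$: it is normal, integral over $A$, and has the same fraction field, since any $r\in R$ equals $pr/p$ with $pr\in I\subseteq A$. As $B\subsetneq R/I$ and $R/I$ is generically purely inseparable over $B$, I choose $\eta\in(R/I)\setminus B$ and the least $e\geq1$ with $\eta^{p^e}\in B$, and set $\theta=\eta^{p^{e-1}}$, so $\theta\notin B$ but $\theta^p\in B$. Lifting $\theta$ to $t\in R$, I get $t\notin A$, while $t^p\in A$ (its image $\theta^p$ lies in $B$) and $pt\in I\subseteq A$ (its image is $p\theta=0$ because $R/I$ has characteristic $p$). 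I then check $t\in A^*_R=A^*$ directly from the definition of the weak normalization: at a prime $\fp$ with $p\in\fp$ the characteristic exponent of $k(\fp)$ is $p$ and $(t/1)^p=t^p/1\in A_\fp$; at a prime $\fp$ with $p\notin\fp$ the characteristic exponent is $1$ and $t/1=(pt)/p\in A_\fp$, since $p$ is a unit there. Thus $t\in A^*\setminus A$, and $A$ is not weakly normal.

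I expect the main obstacle to lie in this last step, precisely in the interplay of the two hypotheses: generic pure inseparability is what produces an element $\theta$ outside $B$ with $\theta^p\in B$, while the characteristic-$p$ condition on $R/I$ (equivalently $p\in I$) is what forces $pt$ into $A$ and thereby places the lift $t$ in the weak normalization even at the residue-characteristic-zero primes of the mixed-characteristic ring $A$. Along the way one should record the implicit hypothesis $B\neq R/I$ (otherwise $A=R$ is normal, hence weakly normal, and the conclusion fails), and confirm that each $R_\fp$ is semilocal so that the term $J(R_\fp)$ appearing in the definition of $A^*_R$ is meaningful.
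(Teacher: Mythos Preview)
Your argument is correct and fully self-contained, but it proceeds quite differently from the paper. The paper cites \cite[Lemma 2.23]{V11} for seminormality rather than verifying Swan's criterion by hand, and for the failure of weak normality it works geometrically: it identifies $\Spec(R')$ with the pushout $X\cup_Z Y$ via \cite{Sch05}, observes that $\Spec(R')\setminus V(R'\cap I)\cong \Spec(R)\setminus V(I)$ so that $R'\subset R$ is birational, and then notes that the closed immersion $V(R'\cap I)\hookrightarrow \Spec(R')$ identifies with $Z\to Y$, which is a homeomorphism with purely inseparable residue field extensions; this makes $R'\subset R$ a nontrivial weakly subintegral extension, hence $R'$ is not weakly normal. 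Your route instead produces a single explicit witness $t\in R$ with $t^p\in A$ and $pt\in A$ but $t\notin A$, and checks $t\in A^*$ prime by prime straight from the definition. The paper's approach situates the example in the gluing/weak-subintegrality framework and is shorter given the cited lemmas; yours avoids all external references beyond Eakin--Nagata and lying-over, and makes transparent exactly where each hypothesis (generic pure inseparability, $p\in I$, $B\subsetneq R/I$) is consumed. Both leave the Noetherian/local/mixed-characteristic verifications as routine, and both implicitly require $B\neq R/I$, which you rightly flag.
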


\begin{proof}
Let $R'$ be the pull-back of the diagram $R\to R/I \leftarrow B$. First off, the seminormality of $R'$ follows from \cite[Lemma 2.23]{V11}. We put $X=\Spec(R)$, $Y=\Spec(B)$ and $Z=\Spec(R/I)$. Since $B \to R/I$ is generically purely inseparable, the induced map $Z \to Y$ is a homeomorphism with purely inseparable (or trivial) residue field extensions. From now on, we will show that $R'\subset R$ is a weakly subintegral and birational extension. Since $\Spec(R') =X\cup_Z Y$ (see \cite[Theorem 3.4]{Sch05}), we have an isomorphism:
$$
\Spec(R') \setminus V(R' \cap I) \cong X \setminus V(I),
$$
where $V(I)=Y$. Thus, $R' \to R$ is a birational extension. On the other hand, notice that $V(R' \cap I) \to V(I)$ coincides with $Z \to Y$, which was already seen to be weakly subintegral. Hence we have a (non-trivial) weakly subintegral birational extension $R'\subset R$. In particular, $R'$ is not weakly normal.
It is not hard to check that $R'$ is a Noetherian local domain with mixed characteristic and we leave it as an exercise.
\end{proof}

\begin{proof}[Proof of Example \ref{notweaklynormal}]
With the same notation as in Theorem \ref{pull-back}, we put $R=\mathbb{Z}_2[[T]]$ and set $I$ to be the ideal generated by $2$. Then we have $R/I \cong \mathbb{F}_2[[T]]$. Let $B=\mathbb{F}_2[[T^2]]$ be a subring of $\mathbb{F}_2[[T]]$. Notice that
$$
B= \mathbb{F}_2[[T^2]]\to \mathbb{F}_2[[T]]\cong R/I
$$
satisfies the hypothesis of Theorem \ref{pull-back} (c.f. \cite[Example 2.13]{V11}). By a simple calculation, we see that $2T$, $T^2$ and $\mathbb{Z}_2$ generate the pull-back as a ring. We set $Y=2T$, $X=T^2$ to get the relation $Y^2 -4X=0$. Then the pull-back of the diagram $R\to R/I \leftarrow B$ is the same ring as $A$.
\end{proof}

\begin{remark}
For any prime $p>0$, setting $B=\mathbb{F}_p[[T^p]]$ as a subring of $R=\mathbb{F}_p[[T]]$, we can modify Example \ref{notweaklynormal} to get the similar result in mixed characteristic $p>0$.
\end{remark}

\section{Local Bertini theorem for weak normality in mixed characteristic}

In this section, we discuss the local Bertini theorem for weak normality in mixed characteristic. The second-named author and Ochiai proved the local Bertini theorem for normality in the mixed characteristic case; see \cite[Theorem 4.4]{OS15}. Cumino, Greco and Manaresi studied the Bertini theorem for weak normality in characteristic zero in \cite{CGM83}. To state our theorem, we need some preparation and we begin with the definition of the specialization map to formulate the local Bertini theorem in mixed characteristic.

\begin{definition}[Specialization map]
\label{spmap}
Let $(V,\pi,k)$ be a discrete valuation ring. Recall the construction of the \textit{specialization map} $\Sp_V:\mathbb{P}^n(V) \to \mathbb{P}^n(k)$. Let us pick a point $\alpha=(\alpha_0:\dots:\alpha_n) \in \mathbb{P}^n(V)$ with its lift $\widetilde{\alpha}=(\widetilde{\alpha}_0,\ldots,\widetilde{\alpha}_n) \in V^{n+1} \setminus \{0,\ldots,0\}$. Then we define
$$
\Sp_V(\alpha):=(\overline{\alpha}_0:\cdots:\overline{\alpha}_n) \in \mathbb{P}^n(k),
$$
where we put $\overline{\alpha}_i:=\widetilde{\alpha}_i \pmod {\pi_V}$. 
\end{definition}

Every point of $\mathbb{P}^n(V)$ is normalized and this map is independent of the lift of $\alpha=(\alpha_0:\cdots:\alpha_n)$. Therefore, the specialization map is well defined. Let $(R,\fm,k)$ be a Noetherian local $V$-algebra and pick a system of elements $x_0,\ldots,x_n$ from the maximal ideal $\fm$ and choose a point $\alpha=(\alpha_0:\cdots:\alpha_n) \in \mathbb{P}^n(V)$. Let us put
$$
\mathbf{x}_{\widetilde{\alpha}}:=\sum_{i=0}^n \widetilde{\alpha}_i x_i,
$$
where $\widetilde{\alpha}=(\widetilde{\alpha}_0,\ldots,\widetilde{\alpha}_n) \in V^{n+1} \setminus \{0,\ldots,0\}$ is a lift of $\alpha=(\alpha_0:\cdots:\alpha_n) \in \mathbb{P}^n(V)$ through the quotient map $V^{n+1} \setminus \{0,\ldots,0\} \to \mathbb{P}^n(V)$. The principal ideal $\mathbf{x}_{\widetilde{\alpha}}R$ does not depend on the lift of $\alpha \in \mathbb{P}^n(V)$.

For an ideal $I\subseteq R$ of a Noetherian ring $R$, we denote by $V(I)$ the set of points of $\Spec(R)$ which contain $I$. We denote by $\Reg(R)$ the \textit{regular locus} of $\Spec (R)$ and by $\Sing(R)$ the \textit{singular locus} of $\Spec(R)$. Denote by $\Spec^\circ(R)$ the complement of the set of all maximal ideals in $\Spec(R)$. Finally, denote by $\WN(R)$ the set of $\fp \in \Spec R$ such that $R_\fp$ is weakly normal. We denote the \textit{$n$-th symbolic power ideal} of $\fp$ by $\fp^{(n)} = \fp^{n} R_{\fp} \cap R$. We need a generalization of \cite[Theorem 4.3]{OS15}:

\begin{theorem}
\label{local-Bertini-Normal}
Suppose that $(R,\fm,k)$ is a complete local domain of mixed characteristic with residue field of characteristic $p>0$ and that the following conditions hold: 
\begin{enumerate}
\item
$V \to R$ is a coefficient ring map, where $(V,\pi,k)$ is an unramified complete discrete valuation ring which induces $V/(\pi) \cong R/\fm$;

\item
$x_0,x_1,\ldots,x_d$ is a set of elements of $\fm$;

\item
the residue field $k$ is infinite.
\end{enumerate}
Consider the natural map of $R$-modules:
$$
\phi:\bigoplus_{i=0}^n Rdx_i \to \widehat{\Omega}_{R/V}
$$
and let $W$ be the subset of the punctured spectrum $\Spec^\circ(R)$ consisting of prime ideals $\fp$ for which the localization map $\phi_{\fp}$ is surjective. Then $W$ is open in $\Spec(R)$ and there exists a non-empty Zariski open subset $\mathcal{U}' \subseteq \mathbb{P}^{d}(k)$ such that
$$
\mathbf{x}_{\widetilde{\alpha}}:=\sum_{i=0}^d \widetilde{\alpha}_i x_i \notin \fp^{(2)}
$$
for every $\fp \in W$ and every $\alpha=(\alpha_0:\cdots:\alpha_d) \in \Sp^{-1}_V(\mathcal{U}') \subseteq \mathbb{P}^{d}(V)$.
\end{theorem}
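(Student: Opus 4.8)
The plan is to convert the symbolic-power condition $\mathbf{x}_{\widetilde{\alpha}}\notin\fp^{(2)}$ into a nonvanishing statement for Kähler differentials, and then to produce the open set by a generic-choice (Bertini-type) argument that is carried out, after reduction modulo $\pi$, on the special fibre, following the method of \cite[Theorem 4.3]{OS15}. First I would dispose of the openness of $W$. Set $C:=\coker(\phi)$, a finitely generated $R$-module. For $\fp\in\Spec^\circ(R)$ one has $\phi_\fp$ surjective iff $C_\fp=0$ iff $\fp\notin\Supp(C)$, so that
$$
W=\Spec^\circ(R)\setminus\Supp(C).
$$
Since $\Supp(C)$ is closed and $\Spec^\circ(R)=\Spec(R)\setminus\{\fm\}$ is open, $W$ is open in $\Spec(R)$.

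Next I would record the differential criterion. For $f\in\fp^{(2)}=\fp^{2}R_\fp\cap R$ we may write $f=\sum_j a_jb_j$ with $a_j,b_j\in\fp R_\fp$; applying the universal $V$-derivation $d\colon R\to\widehat{\Omega}_{R/V}$ and passing to $\widehat{\Omega}_{R/V}\otimes_R k(\fp)$, the Leibniz rule shows the class of $df$ vanishes there. Contrapositively, if the class of $d\mathbf{x}_{\widetilde{\alpha}}$ is nonzero in $\widehat{\Omega}_{R/V}\otimes_R k(\fp)$, then $\mathbf{x}_{\widetilde{\alpha}}\notin\fp^{(2)}$. As each $\widetilde{\alpha}_i\in V$ is a $V$-constant, $d\mathbf{x}_{\widetilde{\alpha}}=\sum_i\widetilde{\alpha}_i\,dx_i$, so it suffices to arrange
$$
\sum_{i=0}^d\overline{\widetilde{\alpha}_i}\,\overline{dx_i}\neq0\quad\text{in }\widehat{\Omega}_{R/V}\otimes_R k(\fp)\qquad\text{for every }\fp\in W,
$$
where $\overline{\widetilde{\alpha}_i}$ is the image of $\widetilde{\alpha}_i$ in $k(\fp)$. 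For $\fp\in W$ the surjectivity of $\phi_\fp$ means, by Nakayama, that $\overline{dx_0},\dots,\overline{dx_d}$ span this space; moreover it is nonzero whenever $\fp\in\Spec^\circ(R)$ and $\pi\in\fp$, since then $k(\fp)$ has positive transcendence degree over $k$ and hence $\Omega_{k(\fp)/k}\neq0$. The remaining degenerate primes, where the target vanishes so that $R_\fp$ is regular and essentially unramified over $V$ (essentially only the generic point, where $\fp^{(2)}=(0)$), are handled directly by arranging $\mathbf{x}_{\widetilde{\alpha}}\neq0$.

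The point that forces the answer into $\mathbb{P}^{d}(k)$ is that the displayed condition is governed by $\Sp_V(\alpha)$: for a prime with $\pi\in\fp$ the map $V\to k(\fp)$ factors through $k=V/(\pi)$, so $\overline{\widetilde{\alpha}_i}=\overline{\alpha}_i$ depends only on $\Sp_V(\alpha)$. I would therefore reduce modulo $\pi$. Writing $\overline R:=R/\pi R$ and using the base-change isomorphism $\widehat{\Omega}_{R/V}\otimes_R\overline R\cong\widehat{\Omega}_{\overline R/k}$, the vertical part of $W$ maps into the locus $\overline W\subseteq\Spec^\circ(\overline R)$ where $\overline{\phi}\colon\bigoplus_i\overline R\,d\overline{x}_i\to\widehat{\Omega}_{\overline R/k}$ is surjective, and the condition becomes $\sum_i\overline{\alpha}_i\,d\overline{x}_i\neq0$ in $\widehat{\Omega}_{\overline R/k}\otimes k(\overline{\fp})$ for all $\overline{\fp}\in\overline W$. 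This is exactly an equal-characteristic-$p$ local Bertini nonvanishing, which yields a nonempty Zariski-open $\mathcal U'\subseteq\mathbb{P}^{d}(k)$; the primes with $\pi\notin\fp$ live on $R[1/\pi]$, are of equal characteristic zero, and are treated by the classical argument together with the $V$-flatness of $R$, so that a lift $\alpha\in\Sp^{-1}_V(\mathcal U')$ stays admissible for them as well.

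I expect the main obstacle to be the globalization over the infinitely many primes of $W$, since one cannot merely intersect the proper bad subspaces attached to individual $\fp$. As in \cite[Theorem 4.3]{OS15} I would control this by forming the incidence locus
$$
Z=\{(\fp,\alpha)\in W\times\mathbb{P}^{d}\;:\;\textstyle\sum_i\overline{\widetilde{\alpha}_i}\,\overline{dx_i}=0\}
$$
and estimating its fibres: surjectivity of $\phi_\fp$ on $W$ forces the fibre over each $\fp$ to be a proper linear subspace, and a Chevalley constructibility together with a dimension count then shows that the projection of $Z$ to $\mathbb{P}^{d}$ is not dominant, its complement transported through $\Sp_V$ giving $\mathcal U'$. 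The delicate mixed-characteristic input is precisely that a single open subset of $\mathbb{P}^{d}(k)$, pulled back along the specialization map, simultaneously meets the generic- and special-fibre requirements; this is where the hypotheses that $V$ is unramified and that $k$ is infinite are used.
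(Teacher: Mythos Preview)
Your openness argument for $W$ and the differential criterion (that $f\in\fp^{(2)}$ forces $df=0$ in $\widehat{\Omega}_{R/V}\otimes k(\fp)$) are correct and match the paper's implicit use of them. However, the proof has a genuine gap in the globalization step, and the strategy diverges from the paper's in a way that creates problems.

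The critical missing ingredient is a \emph{lower bound} on the rank of $\widehat{\Omega}_{R/V}\otimes k(\fp)$. Knowing only that the fibre of your incidence locus $Z$ over each $\fp\in W$ is a \emph{proper} linear subspace of $\mathbb{P}^d$ gives fibre dimension $\le d-1$; since $W$ itself can have dimension $\ge 1$, this does not prevent $Z$ from dominating $\mathbb{P}^d$. What one needs is Flenner's bound $\mu_\fp(\widehat{\Omega}_{R/V})\ge\dim(R/\fp)-1$ (\cite[Lemma 2.6]{Fl77}), which forces the fibre over $\fp$ to have codimension at least $\dim\big(V(\fp)\cap W\big)$ and makes the dimension count go through. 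The paper invokes this bound explicitly and then feeds it into the basic-element machinery of \cite[Lemma 3.4]{OS15}.

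Second, your splitting into $\pi\in\fp$ and $\pi\notin\fp$ is not how the paper proceeds, and the characteristic-zero branch is not handled. For $\fp$ with $\pi\notin\fp$, the images of the $\widetilde{\alpha}_i$ in $k(\fp)$ do \emph{not} depend only on $\Sp_V(\alpha)$, so an open set found on the special fibre $R/\pi R$ has no a~priori control over these primes; the phrase ``treated by the classical argument together with the $V$-flatness of $R$'' does not supply a mechanism. The paper avoids this entirely by working uniformly in $R[X_0,\ldots,X_d]$: one shows (using Flenner's bound and catenarity of $R$) that after localizing at $\fm R[X_0,\ldots,X_d]$ the bad locus becomes a \emph{finite} set of primes, and then one specializes the $X_i$ to elements of $V$ avoiding these finitely many obstructions via \cite[Lemma 4.2]{OS15}. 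This finiteness, not a fibre-by-fibre estimate over $\mathbb{P}^d$, is what produces a single open set in $\mathbb{P}^d(k)$ that works for all $\fp\in W$ regardless of whether $\pi\in\fp$.
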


\begin{proof}
Since $\phi$ is a homomorphism between finitely generated $R$-modules, we see that the support of $\coker(\phi)$ is a closed subset of $\Spec(R)$. Since $\Spec^\circ(R)$ is open in $\Spec(R)$, it follows that $W$ is open in $\Spec(R)$. We may assume that $W$ is non-empty without loss of generality. Fix $\fp \in W$. Then we have $\mu_\fp(\widehat{\Omega}_{R/V}) \ge \dim(R/\fp)-1$ by \cite[Lemma 2.6]{Fl77}. Since $R$ is a catenary local domain and $W \subset \Spec^\circ(R)$ is a non-empty open subset, we have that $\dim(R/\fp)-1=\dim\big(V(\fp) \cap W\big)$: Indeed, there is a nonzero ideal $I \subset R$ such that $W=\Spec^\circ(R) \setminus V(I)$. Then any prime ideal $\fq \subset \Spec(R)$ that is maximal such that $\fp \subset \fq$ and $I \not \subset \fq$ satisfies $\Ht(\fq)=\dim R-1$ and $\fq \in W$. So we have $\dim\big(V(\fp) \cap W\big)=\Ht(\fq)-\Ht(\fp)=\dim(R/\fp)-1$ and
\begin{equation}
\label{minimal}
\mu_\fp(\widehat{\Omega}_{R/V}) \ge \dim\big(V(\fp) \cap W\big)~\mbox{for every}~\fp \in W.
\end{equation}
Using $(\ref{minimal})$, the rest of the proof proceeds by modifying slightly the proof of \cite[Theorem 4.3]{OS15}, and we indicate only main ideas for the sake of readability.

Let $\phi:R \to R[X_0,\ldots,X_d]$ be the natural map with the associated map $\phi^*:\Spec (R[X_0,\ldots,X_d]) \to \Spec(R)$. By $(\ref{minimal})$ together with \cite[Lemma 3.4]{OS15}, there is an ideal $(F_1,\ldots,F_r) \subset R[X_0,\ldots,X_d]$ such that
\begin{equation}
\label{dimensionformula}
\dim \big(V(F_1,\ldots,F_r) \cap (\phi^*)^{-1}(W)\big) \le d+1
\end{equation}
and
$$
\sum_{n=0}^d dx_n \otimes X_n \in \widehat{\Omega}_{R/V} \otimes_R R[X_0,\ldots,X_d]
$$
is a basic element at every point of $U(F_1,\ldots,F_r) \cap (\phi^*)^{-1}(W)$. Notice that $W \subset U(\fm)=\Spec^\circ(R)$. Let $T$ be the localization of $R[X_0,\ldots,X_d]$ with respect to $\fm R[X_0,\ldots,X_d]$. We have the localization map $\psi:R[X_0,\ldots,X_d] \to T$ and the flat local map $\psi \circ \phi:R \to T$. If we can show that the subset $V(F_1,\ldots,F_r) \cap (\phi^* \circ \psi^*)^{-1}(W) \subset \Spec(T)$ is finite, then the remainder of the proof follows exactly as the proof of \cite[Theorem 4.3]{OS15} by considering the prime ideals contained in $W$. To deduce the finiteness of $V(F_1,\ldots,F_r) \cap (\phi^* \circ \psi^*)^{-1}(W)$, we can mimic the proof of \cite[Lemma 3.5]{OS15} by using $(\ref{dimensionformula})$. We may assume that $V(F_1,\ldots,F_r) \cap (\phi^* \circ \psi^*)^{-1}(W)$ is not empty. In particular, this implies that there is a $\fq_0 \in V(F_1,\ldots,F_r) \cap (\phi^*)^{-1}(W)$ such that $\fq_0 \subset \fm R[X_0,\ldots,X_d]$. It suffices to show that the set of prime ideals $\fq \in V(F_1,\ldots,F_r) \cap (\phi^*)^{-1}(W)$ such that
\begin{equation}
\label{setprime}
\fq \subset \fm R[X_0,\ldots,X_d]
\end{equation}
is finite. To get a contradiction, assume that there is another $\fq_1 \in V(F_1,\ldots,F_r) \cap (\phi^*)^{-1}(W)$ such that $\fq_1 \subset \fm R[X_0,\ldots,X_d]$ and $\fq_0  \subsetneq \fq_1$. Then $\fq_1 \ne \fm R[X_0,\ldots,X_d]$. Consider the surjection of catenary domains: $R[X_0,\ldots,X_d]/\fq_1 \twoheadrightarrow k[X_0,\ldots,X_d]$. Since the kernel of this surjection is not trivial and $\dim\big(k[X_0,\ldots,X_d]\big)=d+1$, it follows that
\begin{equation}
\label{dimensioncatenary}
\dim\big(R[X_0,\ldots,X_d]/\fq_1\big) \ge d+2.
\end{equation}

There is a subideal $I \subset \fm$ such that $W=U(I)$ and the condition $\fq_1 \in (\phi^*)^{-1}(W)$ implies that $IR[X_0,\ldots,X_d] \not \subset \fq_1$ and thus, there is an element $b \in I$ satisfying $b \notin \fq_1$. Using $(\ref{dimensioncatenary})$, we can construct a chain of primes ideals of $R[X_0,\ldots,X_d]$:
\begin{equation}
\label{saturatedchainprime}
\fq_0 \subsetneq \fq_1 \subsetneq \fq_2 \subsetneq \cdots \subsetneq \fq_{d+2}~\mbox{such that}~b \notin \fq_{d+2}.
\end{equation}
As $b \notin \fq_{d+2}$, we have $\fq_i \in V(F_1,\ldots,F_r) \cap (\phi^*)^{-1}(W)$ for $i=0,\ldots,d+2$. However, $(\ref{saturatedchainprime})$ is of length $d+2$, which clearly violates $(\ref{dimensionformula})$. Thus, no such $\fq_1$ exists and we conclude that any prime ideal in $V(F_1,\ldots,F_r) \cap (\phi^*)^{-1}(W)$ that satisfies the condition $(\ref{setprime})$ is minimal over the ideal $(F_1,\ldots,F_r)$. Therefore, $V(F_1,\ldots,F_r) \cap (\phi^* \circ \psi^*)^{-1}(W)$ is finite.
\end{proof}

\begin{remark}
Let $R$ be a Noetherian ring and let $\fp \in \Spec(R)$ such that $R_{\fp}$ is regular. Take an element $x\in \fp$. If $x\notin \fp^{(2)}$, then we can show that the localization of $R/xR$ at $\fp$ is regular. Since $R_\fp$ is regular, $R_{\fp}/xR_{\fp}$ is regular too. Thus Theorem \ref{local-Bertini-Normal} yields the inclusion $\Reg (R) \cap V(\mathbf{x}_{\widetilde{\alpha}}) \subseteq \Reg(R/\mathbf{x}_{\widetilde{\alpha}}R)$ holds true for $\alpha=(\alpha_0:\cdots:\alpha_d) \in \Sp^{-1}_V(\mathcal{U}')$.
\end{remark}

\begin{lemma}
\label{coefficientring}
Let $R$ be a complete local domain with coefficient ring map $A \to R$, where $A$ is an unramified complete discrete valuation ring. Assume that $R \to S$ is a module-finite extension such that $S$ is normal. Then there is a coefficient ring map $B \to S$ for which there is a commutative square:
$$
\begin{CD}
R @>>> S \\
@AAA @AAA \\
A @>>> B \\
\end{CD}
$$
\end{lemma}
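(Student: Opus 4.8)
The plan is to produce $B$ as a coefficient ring of $S$ containing the image of $A$, by enlarging $A$ across the residue field extension. First I would record the structure: since $S$ is a normal domain that is module-finite over the complete local domain $R$, it is itself a complete local Noetherian normal domain; write $(S,\fn,\ell)$. The residue field $\ell$ is finite over $k$ because $S/\fm S$ is a finite-dimensional $k$-algebra and $\ell$ is a field quotient of it. In our setting $A$ is an unramified complete discrete valuation ring of mixed characteristic, so its uniformizer is $p$ and $A/pA\cong k$ has characteristic $p$; as $S$ has mixed characteristic $p$, the image of $p$ is a nonzerodivisor, the composite $A\to R\to S$ is injective and local, and it identifies $A$ with a complete discrete valuation subring of $S$ with $\fm_A=pA=pS\cap A$ and residue field the image of $k\subseteq\ell$. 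The task is therefore to enlarge this copy of $A$ to a complete discrete valuation subring $B\subseteq S$ with $\fm_B=pB$ and $B/pB\xrightarrow{\sim}\ell$, i.e. a coefficient ring of $S$ containing $A$; the map $A\to B$ and the commutative square then come for free.

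For the construction I would treat the separable and inseparable parts of $\ell/k$ in turn. Let $\ell_0$ be the separable closure of $k$ in $\ell$. Choosing a primitive element $\bar\theta$ with separable minimal polynomial $\bar g\in k[X]$ and lifting $\bar g$ to a monic $g\in A[X]$, Hensel's lemma in the complete local ring $S$ produces a root $\theta\in S$ reducing to $\bar\theta$; then $A[\theta]\cong A[X]/(g)$ is a $p$-uniformized complete discrete valuation subring of $S$ with residue field $\ell_0$. For the purely inseparable extension $\ell/\ell_0$ I would lift a $p$-basis $\bar t_1,\ldots,\bar t_r$ of $\ell$ over $\ell_0$ to elements $t_1,\ldots,t_r\in S$ and let $B$ be the complete subring of $S$ generated by $A[\theta]$ and the $t_i$. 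The crucial point is that $B$ is again $p$-uniformized, that is $\fm_B=pB$ and $B/pB\cong\ell$; this is exactly the assertion that a Cohen ring is formally smooth over $\mathbb{Z}_p$ in the $p$-adic topology, which holds because every field of characteristic $p$ is separable over the perfect field $\mathbb{F}_p$. Concretely I would realize this by a successive-approximation (Hensel-type) argument: adjust the lifts $t_i$ by elements of $pS,\,p^2S,\ldots$ so that the relations defining $B$ over $A[\theta]$ acquire coefficients genuinely divisible by the appropriate powers of $p$, and pass to the limit using the completeness of $S$. This yields a subring $B$ with $A\subseteq B\subseteq S$ and $B/pB\cong\ell$, and hence the square. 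Equivalently, one may invoke the classical Cohen structure theory (as developed in Grothendieck's EGA~IV or in Bourbaki's Commutative Algebra), namely the statement that a complete $p$-subring of a complete local ring of mixed characteristic extends to a coefficient ring, which packages precisely this argument.

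I expect the main obstacle to be the purely inseparable part $\ell/\ell_0$ of the residue extension: this is where the naive idea of extending a $p$-basis of $k$ to a $p$-basis of $\ell$ fails, and where one must verify that the enlarged ring stays $p$-uniformized instead of acquiring ramification. The remedy is the formal smoothness of Cohen rings recalled above, combined with completeness of $S$ to carry out the $p$-adic approximation. Normality of $S$ enters mainly to guarantee that $S$ is a domain, hence (being module-finite over the complete local domain $R$) a complete local domain; this makes $\ell$ a field so that the coefficient ring of $S$ is well defined, and it forces the constructed subring $B$ to be a domain, so that $B$, having principal maximal ideal $pB$ with field quotient $\ell$, is automatically a discrete valuation ring. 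The remaining checks—that $B$ is Noetherian and complete and that $A\to B$ is a local map making the diagram commute—are routine.
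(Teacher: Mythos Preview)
Your strategy is sound and leads to a correct proof, but it differs from the paper's. The paper does not build $B$ constructively; instead it modifies the Zorn's-lemma proof of Cohen's structure theorem as in Matsumura, \emph{Commutative Ring Theory}, Theorem~29.1: one runs that argument with $L$ taken to be $\Frac(S)$ rather than an algebraic closure, obtains a maximal unramified complete DVR $B\supseteq A$ inside $\Frac(S)$, and then invokes normality of $S$ (together with $B$ being integral over $A\subseteq R\subseteq S$) to conclude $B\subseteq S$. This is shorter, avoids the separable/inseparable case split entirely, and makes the role of normality transparent. In your approach, by contrast, $B$ is built directly inside $S$, so normality is used only to guarantee that $S$ is a complete local domain and hence has a well-defined coefficient ring.

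One point of caution: your explicit treatment of the purely inseparable step is only a sketch. Speaking of a ``$p$-basis of $\ell$ over $\ell_0$'' is not quite the right bookkeeping for a general finite purely inseparable extension, and the successive-approximation argument you allude to (adjusting the lifts $t_i$ modulo increasing powers of $p$) is precisely the delicate part of Cohen's theorem in mixed characteristic; it does not follow from Hensel's lemma alone. Your fallback---citing the standard fact (EGA~$0_{\mathrm{IV}}$ or Bourbaki) that an unramified complete DVR subring of a complete local ring extends to a coefficient ring---is the honest way to close this, and is essentially equivalent to what Matsumura's Zorn's-lemma argument proves. So either tighten the inseparable construction or lean fully on that citation; as written, the explicit paragraph promises more than it delivers.
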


\begin{proof}
The proof is obtained by making a slight modification of the proof of \cite[Theorem 29.1]{M86} as follows: Instead of taking $L$ to be the algebraic closure of $A$ therein, one takes $L$ to be the field of fractions of $S$ and consider the condition $(*)$ as in \cite[Theorem 29.1]{M86}. By Zorn's lemma, one can find a valuation ring $B$ as demanded. The sought ring $B$ will be contained in $S$ as $S$ is assumed to be normal.
\end{proof}

\begin{proposition}
\label{semilocalnormalBertini}
Let $(V,\pi,k)$ be an unramified discrete valuation ring of mixed characteristic with infinite residue field and let $(R,\fm,k)$ be a $V$-flat excellent local domain such that $V \to R$ induces an isomorphism $V/(\pi) \cong R/\fm \cong k$. Assume that $x_0,\ldots,x_d$ is a minimal system of generators of $\fm$ and 
the map $R \to \overline{R}$ is unramified in codimension $1$, where $\overline{R}$ is the integral closure of $R$ in the field of fractions of $R$.

Then there is a non-empty Zariski open subset $\mathcal{U} \subset \mathbb{P}^d(k)$ for which $(\overline{R}/\mathbf{x}_{\widetilde{\alpha}}\overline{R})_\fp$ is a semi-local reduced normal ring of mixed characteristic for every $\alpha=(\alpha_0:\cdots:\alpha_d) \in \Sp^{-1}_V(\mathcal{U})$ and every $\fp \in \Spec^\circ(R) \cap V(\mathbf{x}_{\widetilde{\alpha}})$. If moreover $\depth_\fm \overline{R} \ge 3$, then $\overline{R}/\mathbf{x}_{\widetilde{\alpha}}\overline{R}$ is a semi-local reduced normal ring.
\end{proposition}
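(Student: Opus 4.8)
The plan is to deduce the statement by applying the local Bertini theorem for normality (Theorem \ref{local-Bertini-Normal} together with the Remark following it, in the manner of \cite[Theorem 4.4]{OS15}) to the normalization $\overline{R}$ rather than to $R$ itself. First I would record the structure of $\overline{R}$: since $R$ is an excellent local domain, $\overline{R}$ is a module-finite normal $R$-algebra, hence a semi-local normal domain whose maximal ideals $\fn_1,\ldots,\fn_s$ all lie over $\fm$; moreover $\overline{R}$ is $V$-flat (being a $V$-torsion-free domain containing $V$) and of mixed characteristic, so the normality Bertini machinery can be run with $\overline{R}$ in place of the complete local domain appearing in Theorem \ref{local-Bertini-Normal}. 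The elements $x_0,\ldots,x_d$, being generators of $\fm$, lie in every $\fn_j$, and $\mathbf{x}_{\widetilde{\alpha}}$ is a nonzero element of the domain $\overline{R}$, hence a non-zero-divisor.

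Next I would arrange the hypotheses needed to invoke Theorem \ref{local-Bertini-Normal} on $\overline{R}$. For each $j$, the localization $\overline{R}_{\fn_j}$ is an excellent normal local domain, so its completion is a complete normal local domain of mixed characteristic, and excellence lets me pass freely between $\overline{R}_{\fn_j}$ and its completion when testing regularity and normality. Lemma \ref{coefficientring} then furnishes a coefficient ring $B_j$ fitting into a commutative square over $V \to R$. With these coefficient rings in hand I would apply Theorem \ref{local-Bertini-Normal} and its Remark at each $\fn_j$, using that $R \to \overline{R}$ is unramified in codimension $1$ to guarantee that the differential map $\phi$ built from the $dx_i$ is surjective on the relevant locus, so that the $\fp^{(2)}$-avoidance (and hence regularity of the section) is transported from the $R$-data to $\overline{R}$. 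Intersecting the finitely many resulting open sets produces a single non-empty Zariski open $\mathcal{U} \subseteq \mathbb{P}^d(k)$ that works at every maximal ideal simultaneously.

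Normality of the section is then checked through Serre's criterion $(R_1)+(S_2)$. For $(R_1)$: since $\overline{R}$ is normal, $\Sing(\overline{R})$ has codimension $\ge 2$, and for $\alpha \in \Sp_V^{-1}(\mathcal{U})$ the Remark after Theorem \ref{local-Bertini-Normal} gives $\Reg(\overline{R})\cap V(\mathbf{x}_{\widetilde{\alpha}}) \subseteq \Reg(\overline{R}/\mathbf{x}_{\widetilde{\alpha}}\overline{R})$, while a dimension count shows $\Sing(\overline{R}/\mathbf{x}_{\widetilde{\alpha}}\overline{R}) \subseteq \Sing(\overline{R})\cap V(\mathbf{x}_{\widetilde{\alpha}})$ has codimension $\ge 2$ in the section; thus the section is $(R_1)$. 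Reducedness follows from $(R_0)+(S_1)$, the latter because $\overline{R}$ is $(S_2)$ and $\mathbf{x}_{\widetilde{\alpha}}$ is a non-zero-divisor. For $(S_2)$ on the punctured spectrum I would argue, following the depth bookkeeping in the proof of \cite[Theorem 4.4]{OS15}, that cutting the $(S_2)$-ring $\overline{R}$ by the generic non-zero-divisor $\mathbf{x}_{\widetilde{\alpha}}$ preserves $(S_2)$ away from the closed points, so that $(\overline{R}/\mathbf{x}_{\widetilde{\alpha}}\overline{R})_\fp$ is normal for every $\fp \in \Spec^\circ(R)\cap V(\mathbf{x}_{\widetilde{\alpha}})$; this yields the first assertion. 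For the second assertion, the hypothesis $\depth_\fm \overline{R} \ge 3$ forces $\depth (\overline{R}/\mathbf{x}_{\widetilde{\alpha}}\overline{R})_{\fn_j} \ge 2$, supplying $(S_2)$ at the maximal ideals as well, and combined with the punctured-spectrum case this gives $(R_1)+(S_2)$ globally, hence normality of $\overline{R}/\mathbf{x}_{\widetilde{\alpha}}\overline{R}$.

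The step I expect to be the main obstacle is the Serre condition $(S_2)$ for the section. Because $\overline{R}$ is only guaranteed to be $(S_2)$ (normality gives nothing stronger), a generic hyperplane section is automatically $(S_1)$ but not $(S_2)$; gaining one further degree of depth is precisely what forces the hypothesis $\depth_\fm\overline{R} \ge 3$ in the global statement, and the punctured-spectrum version must be extracted carefully from the depth estimates underlying \cite[Theorem 4.4]{OS15}. A secondary difficulty is the uniformity of the genericity: one must ensure that a single open set $\mathcal{U}$ controls all of the finitely many completions $\widehat{\overline{R}_{\fn_j}}$ at once, and that the hypothesis "unramified in codimension $1$" genuinely allows the differential criterion of Theorem \ref{local-Bertini-Normal}, phrased over $R$, to govern the regular locus of the section of $\overline{R}$.
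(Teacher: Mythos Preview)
Your strategy matches the paper's: pass to the completion of $\overline{R}$, decompose it as $\bigoplus_i S_i$ with each $S_i$ a complete normal local domain, use Lemma~\ref{coefficientring} to equip each $S_i$ with a coefficient ring over $V$, apply Theorem~\ref{local-Bertini-Normal} piecewise, intersect the resulting open sets (descending along $\mathbb{P}^d(k)\to\mathbb{P}^d(K_i)$ via \cite[Proposition~2.5]{OS15}), and then descend normality back to $\overline{R}/\mathbf{x}_{\widetilde{\alpha}}\overline{R}$ by faithful flatness.

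Two places where your sketch is too loose compared with the paper's execution. First, your $(R_1)$ argument says ``a dimension count shows $\Sing(\overline{R}/\mathbf{x}_{\widetilde{\alpha}}\overline{R})\subseteq\Sing(\overline{R})\cap V(\mathbf{x}_{\widetilde{\alpha}})$ has codimension $\ge 2$.'' That is not a dimension count: if $\mathbf{x}_{\widetilde{\alpha}}$ lies in a height-$2$ minimal prime of $\Sing(\overline{R})$ the codimension drops to $1$. The paper handles this by isolating the finite set $Q_1$ of minimal primes of $\Supp(\widehat{\Omega}_{S/\widehat{R}})$ (this is where ``unramified in codimension~$1$'' enters, forcing $\Ht(\fp)\ge 2$ for $\fp\in Q_1$), pulling it back to the finite set $\widetilde{Q}_1\subset\Spec(S)$, and using \cite[Lemma~4.2]{OS15} to make $\mathbf{x}_{\widetilde{\alpha}}$ avoid every such prime; only then does Theorem~\ref{local-Bertini-Normal} give $\mathbf{x}_{\widetilde{\alpha}}\notin\fp^{(2)}$ at all remaining height-$2$ primes. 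Second, your $(S_2)$ step (``cutting the $(S_2)$-ring by a generic non-zero-divisor preserves $(S_2)$ away from the closed points'') is exactly the subtle point, and the ingredient that makes it work is \cite[Lemma~3.2]{Fl77}: for a complete normal local domain $S$, the set $Q_2=\{\fp\in\Spec^\circ(S):\depth S_\fp=2,\ \dim S_\fp>2\}$ is \emph{finite}, so one can again invoke \cite[Lemma~4.2]{OS15} to avoid $Q_2$; once $\mathbf{x}_{\widetilde{\alpha}}\notin\fp$ for all $\fp\in Q_2$, any $\fp$ of height $\ge 3$ containing $\mathbf{x}_{\widetilde{\alpha}}$ has $\depth S_\fp\ge 3$, whence $\depth(S/\mathbf{x}_{\widetilde{\alpha}}S)_\fp\ge 2$. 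Your reference to the depth bookkeeping in \cite[Theorem~4.4]{OS15} points in the right direction, but Flenner's finiteness lemma is the actual mechanism and should be named.
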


\begin{proof}
Let $\widehat{\overline{R}}$ be the $\fm$-adic completion of $\overline{R}$. Since $R$ is excellent, it follows that $\widehat{\overline{R}} \cong \widehat{R} \otimes_R \overline{R}$. Since $\widehat{\overline{R}}$ is a semi-local normal ring, we get $\widehat{\overline{R}} \cong \bigoplus_{i=1}^m S_i$, where $(S_i,\fm_i,K_i)$ is a complete normal local domain with residue field $K_i$. (If moreover 
$\depth_\fm \overline{R} \ge 3$, then $\depth_{\fm_i} S_i \ge 3$.) Moreover, the natural mapping:
\begin{equation}
\label{faithfulflat}
\overline{R} \to \widehat{\overline{R}}=\bigoplus_{i=1}^m S_i
\end{equation}
is faithfully flat. We note that the $\pi$-adic completion of $V$ which is $A:=\widehat{V}$ gives a coefficient ring for $\widehat{R}$. Now we prove the following claim:
\begin{enumerate}
\item[$(\#)$:]
Let $\Sp_{A}:\mathbb{P}^d(A) \to \mathbb{P}^n(k)$ be the specialization map.
 For each $i$, there is a non-empty open subset $\mathcal{V}_i \subset \mathbb{P}^d(k)$ such that $S_i/\mathbf{x}_{\widetilde{\alpha}}S_i$ is a normal domain of mixed characteristic, where
$$
\mathbf{x}_{\widetilde{\alpha}}:=\sum_{i=0}^d \widetilde{\alpha}_i x_i
$$
for every $\alpha=(\alpha_0:\cdots:\alpha_d) \in \Sp^{-1}_A(\mathcal{V}_i)$.
\end{enumerate}

For brevity of notation, we write $S=S_i$ and $K=K_i$ and let $\fn$ be its maximal ideal. Before starting the proof, we emphasize that the image of $x_0,\ldots,x_d$ in $S$ generates an $\fn$-primary ideal which may not be $\fn$ itself. To show the normality of $(S/\mathbf{x}_{\widetilde{\alpha}}S)_\fp$, it suffices to check Serre's $(R_1)$ and $(S_2)$-conditions. So let $\fp \in \Spec^\circ(S) \cap V(\mathbf{x}_{\widetilde{\alpha}})$. If $\Ht(\fp) > 2$, then we need to show that $\depth (S/\mathbf{x}_{\widetilde{\alpha}}S)_\fp \ge 2$. If $\Ht(\fp)=2$, then we need to show that $(S/\mathbf{x}_{\widetilde{\alpha}}S)_\fp$ is a discrete valuation ring.

First, we deal with the case $\Ht(\fp)=2$. The point in this case is to modify the proof of Theorem \ref{local-Bertini-Normal} as needed. We consider the exact sequence of completed modules of K\"ahler differentials applied to $A \to \widehat{R} \to S$:
\begin{equation}
\label{KahlerExt}
\widehat{\Omega}_{\widehat{R}/A} \widehat{\otimes}_{\widehat{R}} S \to \widehat{\Omega}_{S/A} \to \widehat{\Omega}_{S/\widehat{R}} \to 0.
\end{equation}
Since $\widehat{R} \to S$ is module-finite, we know that $\widehat{\Omega}_{\widehat{R}/A}=\Omega_{\widehat{R}/A}$ and that $\widehat{\Omega}_{\widehat{R}/A}$ is spanned by the image of $dx_0,\ldots,dx_d$ as an $\widehat{R}$-module. From the presentation $\widehat{\overline{R}} \cong \widehat{R} \otimes_R \overline{R}$, it follows that $\widehat{\overline{R}}$ is the integral closure of $\widehat{R}$ in the total ring of fractions. In particular, the map $\widehat{R} \to S$ is unramified in codimension $1$. We also note that if $\fp \in \Spec \widehat{R}$ for which $\widehat{R}_{\fp} \to S_\fp$ is unramified, then $\widehat{\Omega}_{S/\widehat{R}}$ vanishes after localizing at $\fp$. Set
$$
Q_1:=\Big\{\fp \in \Spec^\circ(\widehat{R})~\Big|~\fp \  \mbox{is a minimal prime of}~\Supp(\widehat{\Omega}_{S/\widehat{R}})\Big\},
$$
which is a finite set. As $\widehat{R} \to S$ is unramified in codimension $1$, every $\fp \in Q_1$ has height at least $2$. Set $\Spec^2(\widehat{R})$ to be the set of all prime ideals of height $2$. For all $\fp \in \Spec^2(\widehat{R}) \setminus Q_1$ (in other words, almost all prime ideals in $ \Spec^2(\widehat{R})$), we get the vanishing: $(\widehat{\Omega}_{S/\widehat{R}} )_\fp=0$, which shows that the $S_\fp$-module $(\widehat{\Omega}_{S/A})_\fp$ is spanned by the images of $dx_0,\ldots,dx_d$ as can be deduced from the exact sequence $(\ref{KahlerExt})$.

Recall that $A$ is a coefficient ring for $\widehat{R}$, but may not be for $S$, because of the possibility of the non-triviality of the residue field extension for $\widehat{R} \to S$. By Lemma \ref{coefficientring}, one can construct a coefficient ring map $B \to S$ and a commutative square:
$$
\begin{CD}
\widehat{R} @>>> S \\
@AAA @AAA \\
A @>>> B\\
\end{CD}
$$

As the natural map $\widehat{\Omega}_{S/A} \to \widehat{\Omega}_{S/B}$ is surjective, we get the following:
\begin{equation}
\label{Kahlermap}
\mbox{The natural map}~\bigoplus_{i=0}^m S_\fp dx_i \to (\widehat{\Omega}_{S/B})_\fp~\mbox{is surjective for every}~\fp \in \Spec^2(\widehat{R}) \setminus Q_1.
\end{equation}
Applying Theorem \ref{local-Bertini-Normal} in conjunction with $(\ref{Kahlermap})$ yields the following: There exists a non-empty open subset $\mathcal{U}' \subseteq \mathbb{P}^{d}(K)$ such that
\begin{equation}
\label{secondsymbolic}
\mathbf{x}_{\widetilde{\alpha}} := \sum_{i=0}^d \widetilde{\alpha}_i x_i \notin \fp^{(2)}
\end{equation}
for every $\fp \in \Spec^2(S) \setminus \widetilde{Q}_1$ and for every $\alpha=(\alpha_0:\cdots:\alpha_d) \in \Sp^{-1}_B(\mathcal{U}') \subseteq \mathbb{P}^{d}(B)$. Here $\widetilde{Q}_1$ is the inverse image of $Q_1$ under the finite map $\Spec(S) \to \Spec(\widehat{R})$, so that it is again finite. To get $\mathbf{x}_{\widetilde{\alpha}} \notin \fp$ for every $\fp \in \widetilde{Q}_1$, we apply \cite[Lemma 4.2]{OS15} to each prime ideal in $\widetilde{Q}_1$ and we have a non-empty open subset $\mathcal{U}'' \subset \mathbb{P}^{d}(K)$. For $\alpha=(\alpha_0: \cdots :\alpha_d) \in \Sp^{-1}_B(\mathcal{U}' \cap \mathcal{U}'')$, let $\mathbf {x}_{\widetilde{\alpha}}=\sum_{i=0}^d \widetilde{\alpha}_i x_i$. Then if $\fp \in \Spec^2(S) \cap V(\mathbf{x}_{\widetilde{\alpha}})$, since $\mathbf {x}_{\widetilde{\alpha}}$ is not contained in any prime ideal of $\widetilde{Q}_1$, the localization $S_{\fp}$ is regular. Therefore, $(S/\mathbf{x}_{\widetilde{\alpha}}S)_\fp$ is a discrete valuation ring in view of $(\ref{secondsymbolic})$.

Next we examine the $(S_2)$-condition, which is to say that $\Ht(\fp)>2$. Since $S$ is a complete local normal domain,
$$
Q_2=\Big\{\fp \in \Spec^\circ(S)~\Big|~\depth {S_{\fp}}=2~\mbox{and}~\dim{S_{\fp} > 2} \Big\}
$$ 
is a finite set by \cite[Lemma 3.2]{Fl77}. Again applying \cite[Lemma 4.2]{OS15} to each prime ideal in $Q_2$, we have an open subset $\mathcal{U}''' \subset \mathbb{P}^d(K)$ such that $\depth (S/\mathbf{x}_{\widetilde{\alpha}}S)_\fp \ge 2$, where $\alpha=(\alpha_0:\cdots:\alpha_d) \in \Sp^{-1}_B(\mathcal{U}''')$ and $\fp \in \Spec^\circ(S) \cap V(\mathbf{x}_{\widetilde{\alpha}})$ has height at least $3$. Now there is a commutative diagram of projective spaces:
$$
\begin{CD}
\mathbb{P}^d(V) @>>> \mathbb{P}^d(A) @>>> \mathbb{P}^d(B) \\
@V\Sp_{V}VV @V\Sp_AVV @V\Sp_BVV \\
\mathbb{P}^d(k) @= \mathbb{P}^d(k) @>>> \mathbb{P}^d(K) \\
\end{CD}
$$
Let $\mathcal{V} \subset \mathbb{P}^d(k)$ be the inverse image of $\mathcal{U}' \cap \mathcal{U}'' \cap \mathcal{U}''' \subset \mathbb{P}^d(K)$. Then \cite[Proposition 2.5]{OS15} and the commutativity of the above diagram allow us to conclude that $\mathcal{V}$ is the desired non-empty open set establishing $(\#)$.

Let us now return to the notation as in $(\#)$. Let $K_i$ be the residue field of $S_i$. We have the canonical mapping:
$$
\mathbb{P}^d(k) \to \prod_{i=1}^m \mathbb{P}^d(K_i)
$$
and let $\mathcal{U} \subset \mathbb{P}^d(k)$ be the inverse image of the subset $\prod_{i=1}^m \mathcal{V}_i \subset \prod_{i=1}^m \mathbb{P}^d(K_i)$. Again by \cite[Proposition 2.5]{OS15}, $\mathcal{U}$ is a non-empty open subset of $\mathbb{P}^d(k)$, and the following holds: If $\mathbf{x}_{\widetilde{\alpha}}:=\sum_{i=0}^d \widetilde{\alpha}_i x_i$ for $\alpha=(\alpha_0:\cdots:\alpha_d) \in \Sp^{-1}_V(\mathcal{U})$, then $(\widehat{\overline{R}}/\mathbf{x}_{\widetilde{\alpha}}\widehat{\overline{R}})_\fp$ is a normal domain for $\fp \in \Spec^\circ(\widehat{\overline{R}}) \cap V(\mathbf{x}_{\widehat{\alpha}})$, and the element $\mathbf{x}_{\widetilde{\alpha}}$ belongs to $\overline{R}$. The map induced by $(\ref{faithfulflat})$
$$
\overline{R}/\mathbf{x}_{\widetilde{\alpha}}\overline{R} \to \widehat{\overline{R}}/\mathbf{x}_{\widetilde{\alpha}}\widehat{\overline{R}}
$$
is faithfully flat, thus implying the normality of $(\overline{R}/\mathbf{x}_{\widetilde{\alpha}}\overline{R})_\fp$.  Finally, if $\depth_{\fm} \overline{R} \ge 3$ holds, then $\depth_\fm \overline{R}/\mathbf{x}_{\widetilde{\alpha}}\overline{R} \ge 2$ and $\overline{R}/\mathbf{x}_{\widetilde{\alpha}}\overline{R}$ is normal. To make $\overline{R}/\mathbf{x}_{\widetilde{\alpha}}\overline{R}$ of mixed characteristic, one applies \cite[Proposition 2.5]{OS15} to the finite set of height-$1$ primes of $\overline{R}$
 containing $p$, concluding the proof.
\end{proof}

Now we have the following result.

\begin{theorem}
\label{weaknormalBertini}
Let $(V,\pi,k)$ be an unramified complete discrete valuation ring of mixed characteristic with residue field of characteristic $p>0$. Suppose that $(R,\fm,k)$ is a $V$-flat complete local domain which induces an isomorphism on residue fields $k \cong V/(\pi) \cong R/\fm$ and that the following conditions hold:
\begin{enumerate}
\item
$R \to \overline{R}$ is unramified in codimension $1$, where $\overline{R}$ is the integral closure of $R$ in the field of fractions of $R$;

\item
$x_0,\ldots,x_d$ is a fixed set of minimal generators of $\fm$;

\item
the residue field $k$ is infinite.
\end{enumerate}

Then there exists a non-empty Zariski open subset $\mathcal{U} \subset \mathbb{P}^{d}(k)$ such that for a fixed $\alpha=(\alpha_0:\cdots:\alpha_d) \in \Sp^{-1}_V(\mathcal{U})$, the localization $(R/\mathbf{x}_{\widetilde{\alpha}}R)_\fp$ is a weakly normal local reduced ring of mixed characteristic for every $\fp \in \WN(R) \cap V(\mathbf{x}_{\widetilde{\alpha}}) \cap \Spec^\circ(R)$, where we put
$$
\mathbf{x}_{\widetilde{\alpha}}=\sum_{i=0}^d \widetilde{\alpha}_i x_i.
$$
\end{theorem}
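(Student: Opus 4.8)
The plan is to use the normalization $\overline{R}$ as the reference ring and Manaresi's equalizer characterization (Theorem~\ref{characterization}) as the main engine, transporting weak normality from $R$ to the section $R/\mathbf{x}_{\widetilde{\alpha}}R$ by reducing modulo the generic nonzerodivisor $t:=\mathbf{x}_{\widetilde{\alpha}}$. Since $R$ is an excellent complete local domain, $\overline{R}$ is module-finite over $R$; let $\fc$ denote the conductor and recall the cartesian conductor square. The first goal is to arrange that $\overline{R}/t\overline{R}$ plays the role of the normalization of $R/tR$ at the relevant primes: it is module-finite over $R/tR$, it is semi-local reduced normal of mixed characteristic after a generic choice of $t$ by Proposition~\ref{semilocalnormalBertini}, and it is birational onto $R/tR$ at the minimal primes once $t$ avoids the components of $\fc$. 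Given $\fp\in\WN(R)\cap V(\mathbf{x}_{\widetilde{\alpha}})\cap\Spec^\circ(R)$, the ring $R_\fp$ is weakly normal and $\overline{R}_\fp$ is its normalization, so the whole problem is to show that Manaresi's exactness for $R_\fp\subseteq\overline{R}_\fp$ survives reduction modulo $t$.

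For the choice of $t$ I would intersect several non-empty Zariski open conditions on $\mathbb{P}^d(k)$. Proposition~\ref{semilocalnormalBertini} supplies the open set making $\overline{R}/t\overline{R}$ normal; Theorem~\ref{local-Bertini-Normal} supplies the transversality $t\notin\fp^{(2)}$ on the locus where the differential map is onto, keeping the section from acquiring embedded behaviour along the regular locus; and \cite[Lemma~4.2]{OS15} lets me force $t$ to avoid any prescribed finite set of primes. The essential point is that $t$ must be a nonzerodivisor on $R$, on $\overline{R}$, on $R/\fc$, on $\overline{R}/\fc$, and on $(\overline{R}\otimes_R\overline{R})_{\mathrm{red}}$, and that reduction modulo $t$ preserve reducedness of the double tensor product; all of these are avoidance-of-associated-primes conditions. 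Because the family of primes $\fp\in\WN(R)\cap V(\mathbf{x}_{\widetilde{\alpha}})$ moves with $t$, the avoidance cannot be carried out prime by prime: instead I would run the Flenner-type dimension count on $\Spec\big(R[X_0,\ldots,X_d]\big)$ exactly as in the proofs of Theorem~\ref{local-Bertini-Normal} and Proposition~\ref{semilocalnormalBertini}, producing a closed bad locus of dimension at most $d+1$ whose complement yields a single open $\mathcal{U}$ valid for all admissible $\fp$ at once.

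With such a $t$ fixed, I would run the descent through Manaresi. Weak normality of $R_\fp$ gives the exact sequence of $R_\fp$-modules
\[
0 \to R_\fp \to \overline{R}_\fp \xrightarrow{\ f-g\ } M_\fp, \qquad M := (\overline{R}\otimes_R\overline{R})_{\mathrm{red}},
\]
where $R_\fp$ is the equalizer, i.e. the kernel of $f-g$. Writing $D$ for the image of $f-g$, the short exact sequence $0\to R_\fp \to \overline{R}_\fp \to D \to 0$ stays exact after applying $-\otimes_R R/tR$ precisely when $\Tor_1^R(D,R/tR)=(0:_D t)$ vanishes, i.e. when $t$ is a nonzerodivisor on $D\subseteq M$, which the choice above guarantees. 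This yields the exact sequence
\[
0 \to (R/tR)_\fp \to (\overline{R}/t\overline{R})_\fp \to (M/tM)_\fp .
\]
It then remains to identify $M/tM$ with $\big((\overline{R}/t\overline{R})\otimes_{R/tR}(\overline{R}/t\overline{R})\big)_{\mathrm{red}}=\big((\overline{R}\otimes_R\overline{R})/t\big)_{\mathrm{red}}$, after which Theorem~\ref{characterization} identifies $(R/tR)_\fp$ with the equalizer of $(\overline{R}/t\overline{R})_\fp \rightrightarrows \big((\overline{R}/t\overline{R})\otimes_{R/tR}(\overline{R}/t\overline{R})\big)_{\mathrm{red}}$ and hence proves weak normality. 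Reducedness of $(R/tR)_\fp$ follows from its injection into the normal, hence reduced, ring $(\overline{R}/t\overline{R})_\fp$, and the mixed characteristic of the section is read off from the height-one primes containing $p$, again by \cite[Proposition~2.5]{OS15}.

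The hard part is exactly the identification in the last step: taking the reduced quotient and reducing modulo $t$ do not commute in general, so one must ensure that $\big((\overline{R}\otimes_R\overline{R})/t\big)_{\mathrm{red}}$ agrees with $(\overline{R}\otimes_R\overline{R})_{\mathrm{red}}/t$. This is where genericity of $t$ is indispensable: choosing $t$ to avoid the associated primes of $\overline{R}\otimes_R\overline{R}$, and thus keeping the reduction of the double tensor product reduced with $t$ a nonzerodivisor on it, forces the two operations to agree. Controlling this double tensor product uniformly over the moving family $\{\fp\in\WN(R)\cap V(\mathbf{x}_{\widetilde{\alpha}})\}$ --- rather than one prime at a time --- is the principal technical obstacle, and it is resolved by folding the reducedness and nonzerodivisor requirements into the same total-space dimension estimate on $\Spec\big(R[X_0,\ldots,X_d]\big)$ used for the normal Bertini theorem.
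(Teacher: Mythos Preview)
Your strategy matches the paper's: use Manaresi's equalizer characterization (Theorem~\ref{characterization}), invoke Proposition~\ref{semilocalnormalBertini} for the normality of $\overline{R}/t\overline{R}$, and argue that the exact complex $C_\bullet: R\to\overline{R}\xrightarrow{f-g}(\overline{R}\otimes_R\overline{R})_{\mathrm{red}}$ stays exact modulo $t$ at the relevant primes. The execution, however, is more complicated than necessary in one place and not quite right in another.

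First, the ``moving family'' concern is illusory. The paper observes that the primes one must avoid form a \emph{fixed finite} subset of $\Spec(R)$, independent of both $t$ and the roaming $\fp$: namely $I=\Ass(\coker\phi_1)\cup\Ass(\coker\phi_2)$, where $\phi_1:R\hookrightarrow\overline{R}$ and $\phi_2=f-g$. A single application of \cite[Lemma~4.2]{OS15} to each prime in $I$ cuts out the required open set in $\mathbb{P}^d(k)$; no Flenner-style dimension count over $R[X_0,\dots,X_d]$ is needed at this stage. Once $t$ avoids $I$, \cite[Lemma~1.1 and Lemma~1.6]{CGM83} give directly that $(C_\bullet/tC_\bullet)_\fp$ is exact for every $\fp$ at which $(C_\bullet)_\fp$ was already exact, so the conclusion holds simultaneously for all $\fp\in\WN(R)\cap V(t)\cap\Spec^\circ(R)$. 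Your separate invocation of Theorem~\ref{local-Bertini-Normal} is likewise redundant here: its content is already absorbed into Proposition~\ref{semilocalnormalBertini}.

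Second, your diagnosis of the ``hard part'' is correct, but the proposed fix is not. Requiring $M/tM$ to remain reduced, where $M=(\overline{R}\otimes_R\overline{R})_{\mathrm{red}}$, is \emph{not} an avoidance-of-associated-primes condition: a reduced ring modulo a nonzerodivisor need not be reduced (consider $k[x,y]/(xy)$ modulo $x-y$). Folding this into a parameter-space estimate does not address the point either. The paper does not attempt a direct argument here; it cites \cite[Lemma~1.6]{CGM83}, which is precisely the statement engineered to pass from exactness of $(C_\bullet)_\fp$ to exactness of the sequence with $\big((\overline{R}/t\overline{R})\otimes_{R/tR}(\overline{R}/t\overline{R})\big)_{\mathrm{red}}$ in the last position, under the hypothesis that $t$ avoids $\Ass(\coker\phi_2)$. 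You should consult that lemma rather than improvise.
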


\begin{proof}
Denote by $\overline{R}$ the integral closure of $R$ in its field of fractions and
consider the complex of $R$-modules: 
\begin{equation}
\label{complexexact}
C_\bullet:R \xrightarrow{\phi_1} \overline{R} \xrightarrow{\phi_2} (\overline{R} \otimes_R\overline{R})_{\rm{red}}
\end{equation}
as defined in Theorem \ref{characterization} with $\phi_2:=f-g$. Recall that the localization $(C_\bullet)_\fp$ is exact for $\fp \in \WN(R)$. By Proposition \ref{semilocalnormalBertini}, there is a non-empty Zariski open subset $\mathcal{U}' \subset \mathbb{P}^d(k)$ such that the localization of $\overline{R}/\mathbf{x}_{\widetilde{\alpha}}\overline{R}$ is normal at every $\fp \in \Spec^\circ(R) \cap V(\mathbf{x}_{\widetilde{\alpha}})$ and every $\alpha=(\alpha_0:\cdots:\alpha_d) \in \Sp^{-1}_V(\mathcal{U}')$. Let
$$
I:=\big\{\fp~\big|~\fp \in \Ass(\coker(\phi_1)) \cup \Ass(\coker(\phi_2))\big\}.
$$
Next we need to shrink $\mathcal{U}'$ to some appropriate open set in order to force $\mathbf{x}_{\widetilde{\alpha}} \notin \fp$ for every $\fp \in I$. To this aim, we define an open subset $\mathcal{U}_\fp \subset \mathbb{P}^d(k)$ for each $\fp \in I$ by applying \cite[Lemma 4.2]{OS15}. Set $\mathcal{U}:=\mathcal{U}' \cap\big(\cap_{\fp \in I} \mathcal{U}_\fp\big)$. Then by applying both \cite[Lemma 1.1]{CGM83} and \cite[Lemma 1.6]{CGM83}, we find that every element $\mathbf{x}_{\widetilde{\alpha}}$ attached to $\mathcal{U}$ satisfies the following property:
\begin{enumerate}
\item[$(*)$]
The natural map $R/\mathbf{x}_{\widetilde{\alpha}}R \to \overline{R}/\mathbf{x}_{\widetilde{\alpha}}\overline{R}$ is injective and torsion free. Whenever the localization $(C_\bullet)_\fp$ of the complex as in $(\ref{complexexact})$ is exact, then the induced complex of
$R/\mathbf{x}_{\widetilde{\alpha}}R$-modules $(C_\bullet/\mathbf{x}_{\widetilde{\alpha}}C_\bullet)_\fp$ stays exact.
\end{enumerate}

In other words, $(*)$ asserts the following: If $\mathbf{x}_{\widetilde{\alpha}}$ is attached to $\mathcal{U}$ and $\fp \in WN(R) \cap V(\mathbf{x}_{\widetilde{\alpha}}) \cap \Spec^\circ(R)$, then $(\overline{R}/\mathbf{x}_{\widetilde{\alpha}} \overline{R})_{\fp}$ is a semi-local reduced normal ring, the complex induced by $(\ref{complexexact})$:
$$
(R/\mathbf{x}_{\widetilde{\alpha}}R)_{\fp} 
\to (\overline{R}/\mathbf{x}_{\widetilde{\alpha}} \overline{R})_{\fp}
\to \big((\overline{R}/\mathbf{x}_{\widetilde{\alpha}} \overline{R})_{\fp}
\otimes_{(R/\mathbf{x}_{\widetilde{\alpha}} R)_{\fp}} (\overline{R}/\mathbf{x}_{\widetilde{\alpha}} \overline{R})_{\fp}\big)_{\rm{red}}
$$
is exact, and $R/\mathbf{x}_{\widetilde{\alpha}}R
\to \overline{R}/\mathbf{x}_{\widetilde{\alpha}} \overline{R}$ is the normalization map. Hence Theorem \ref{characterization} applies and allows us to conclude that $(R/\mathbf{x}_{\widetilde{\alpha}}R)_{\fp}$ is weakly normal. This completes the proof of the theorem.
\end{proof}

We obtain the local Bertini theorem for weak normality as a corollary.

\begin{corollary}
In addition to the notation and hypotheses of Theorem \ref{weaknormalBertini}, suppose that the complete local domain $R$ is weakly normal. Then there exists a non-empty Zariski open subset $\mathcal{U} \subset \mathbb{P}^{d}(k)$ such that for every $\alpha=(\alpha_0:\cdots:\alpha_d) \in \Sp^{-1}_V(\mathcal{U})$, the following holds:
\begin{enumerate}
\item
$(R/\mathbf{x}_{\widetilde{\alpha}}R)_\fp$ is weakly normal for every $\fp \in V(\mathbf{x}_{\widetilde{\alpha}}) \cap \Spec^\circ(R)$.

\item 
If moreover $\depth R \ge 3$, then $R/\mathbf{x}_{\widetilde{\alpha}}R$ is weakly normal.
\end{enumerate}
\end{corollary}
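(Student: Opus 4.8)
The plan is to derive both assertions from Theorem~\ref{weaknormalBertini}, exploiting that weak normality is a local condition. I would begin by recalling that the formation of the weak normalization commutes with localization, so that a weakly normal ring has weakly normal localizations at all primes; in the present situation this means $\WN(R)=\Spec(R)$. Granting this, assertion~(1) is immediate: substituting $\WN(R)=\Spec(R)$ into the conclusion of Theorem~\ref{weaknormalBertini} shows that the open set $\mathcal{U}$ furnished there already makes $(R/\mathbf{x}_{\widetilde{\alpha}}R)_\fp$ weakly normal for every $\fp\in V(\mathbf{x}_{\widetilde{\alpha}})\cap\Spec^\circ(R)$ and every $\alpha\in\Sp_V^{-1}(\mathcal{U})$.

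For assertion~(2) the task is to promote this pointwise statement on the punctured spectrum to weak normality of the whole ring. Writing $A=R/\mathbf{x}_{\widetilde{\alpha}}R$ and $B=\overline{R}/\mathbf{x}_{\widetilde{\alpha}}\overline{R}$, I would reduce the complex~(\ref{complexexact}) modulo $\mathbf{x}_{\widetilde{\alpha}}$ to obtain the Manaresi complex
$$
A\xrightarrow{\psi_1}B\xrightarrow{\psi_2}(B\otimes_A B)_{\rm red}
$$
and aim to verify the hypotheses of Theorem~\ref{characterization} for the extension $A\subseteq B$. By assertion~(1) together with property~$(*)$ in the proof of Theorem~\ref{weaknormalBertini}, this complex is exact after localizing at every $\fp\in\Spec^\circ(A)$; hence its first cohomology $H=\Ker\psi_2/\im\psi_1$ is a module of finite length supported only at the maximal ideal.

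The decisive step is to show $H=0$ by a depth count. Since $\depth R\ge 3$ and $\mathbf{x}_{\widetilde{\alpha}}$ is a nonzerodivisor on the domain $R$, we have $\depth A\ge 2$. Property~$(*)$ tells us that $A\to B$ is injective and torsion free, so a nonzerodivisor of $A$ lying in $\fm$ stays a nonzerodivisor on $B$ and hence on the equalizer $E=\Ker\psi_2$, giving $\depth_\fm E\ge 1$. Applying the depth lemma to the short exact sequence $0\to A\to E\to H\to 0$ yields
$$
\depth_\fm H\ge\min\{\depth_\fm E,\,\depth A-1\}\ge 1,
$$
which is impossible for a nonzero finite-length module; therefore $H=0$ and the complex is globally exact. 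Injectivity of $\psi_1$ being already part of~$(*)$, Theorem~\ref{characterization} then identifies $A$ with the equalizer, that is, $A$ is weakly normal in $B$.

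What remains, and what I expect to be the main obstacle, is to guarantee that $B=\overline{R}/\mathbf{x}_{\widetilde{\alpha}}\overline{R}$ is genuinely normal and equals the normalization of $A$, so that ``weakly normal in $B$'' really means weakly normal. For this I would invoke the ``moreover'' clause of Proposition~\ref{semilocalnormalBertini}, which makes $\overline{R}/\mathbf{x}_{\widetilde{\alpha}}\overline{R}$ a semi-local reduced normal ring once $\depth_\fm\overline{R}\ge 3$. The delicate point is thus the transfer of depth from $R$ to its normalization: I would argue that $\depth R\ge 3$ forces $\depth_\fm\overline{R}\ge 3$ using that the hypothesis ``$R\to\overline{R}$ unramified in codimension~$1$'' makes $R$ and $\overline{R}$ agree in codimension one, so that the conductor has height at least $2$ and the cokernel $\overline{R}/R$ is supported in codimension $\ge 2$. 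Feeding this into the local cohomology sequence of $0\to R\to\overline{R}\to\overline{R}/R\to 0$ forces $H^2_\fm(\overline{R})\hookrightarrow H^2_\fm(\overline{R}/R)$, which vanishes when $\dim R=3$ (as then $\overline{R}/R$ has dimension $\le 1$); combined with the normality of $\overline{R}$ this gives $\depth_\fm\overline{R}\ge 3$ in that case, while the higher-dimensional case will require extracting more from the seminormality of $R$ via the conductor square. Once $B$ is known to be normal and the normalization of $A$, the global exactness established above and Theorem~\ref{characterization} finish the proof.
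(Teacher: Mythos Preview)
Your argument for (1) is exactly what the paper does: since weak normality localizes, $\WN(R)=\Spec(R)$ and Theorem~\ref{weaknormalBertini} applies verbatim.

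For (2), however, you are working much harder than necessary and you end up with a real gap. The paper dispatches (2) in one line by quoting \cite[Corollary~(IV.4)]{M80}, which is the Serre-type criterion for weak normality: a Noetherian local ring $A$ is weakly normal as soon as $A_\fp$ is weakly normal for every non-maximal prime $\fp$ and $\depth A\ge 2$. With $A=R/\mathbf{x}_{\widetilde{\alpha}}R$, assertion (1) gives the first hypothesis and $\depth R\ge 3$ gives the second, so one is done. The point is that this criterion is intrinsic to $A$; it never mentions $\overline{R}$ or the auxiliary ring $B=\overline{R}/\mathbf{x}_{\widetilde{\alpha}}\overline{R}$.

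Your route instead tries to verify Manaresi's equalizer criterion for the specific extension $A\subset B$, and this forces you to show that $B$ is globally normal and coincides with the normalization of $A$. That in turn requires $\depth_\fm\overline{R}\ge 3$, and here is the gap: you only establish this when $\dim R=3$, and you explicitly leave the higher-dimensional case open (``will require extracting more from the seminormality of $R$ via the conductor square''). In fact the implication $\depth R\ge 3\Rightarrow\depth_\fm\overline{R}\ge 3$ is \emph{not} a formal consequence of the hypothesis that $R\to\overline{R}$ is unramified in codimension~$1$; there is no reason the cokernel $\overline{R}/R$ should have small enough dimension to kill $H^2_\fm(\overline{R})$ in general. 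So as written, your proof of (2) is incomplete. The fix is simply to invoke \cite[Corollary~(IV.4)]{M80} after (1), bypassing $B$ entirely.
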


\begin{proof}
The statement $(1)$ immediately follows from Theorem \ref{weaknormalBertini}, while the statement $(2)$ follows from \cite[Corollary (IV. 4)]{M80}.
\end{proof}

We considered a tantalizing condition that the normalization map $R \to \overline{R}$ is unramified in codimension $1$. This is satisfied, for example, when $R$ has Serre's $(R_1)$-condition. One should notice that weakly normal local rings possessing $(R_1)$-condition are not always normal. Indeed, the authors of \cite{ES16} introduced a certain class of commutative (not necessarily Noetherian) rings, called \textit{perinormal rings}. According to \cite[Proposition 3.2 and Corollary 3.4]{ES16}, any Noetherian perinormal ring is weakly normal satisfying $(R_1)$-condition. Based on \cite[Example 3.6]{ES16}, we present an example of a weakly normal complete local domain of mixed characteristic, which is not normal, but which possesses $(R_1)$-condition with infinite residue class field.

\begin{example}
\label{weaklynormalR_1}
Let $k$ be an algebraically closed field of characteristic $p>0$ and let $W(k)$ be the ring of Witt vectors. Assume that the characteristic of $k$ is different from $2$. Let us consider the subring:
$$
R:=W(k)[[X,Y,XZ,YZ,Z^2]] \subset W(k)[[X,Y,Z]].
$$
Then we can check this example satisfies all the desired conditions. First off, the normalization of $R$ is $W(k)[[X,Y,Z]]$ and hence, $R$ is not normal. As $Z(X,Y) \subset R$ and $(X,Y) \subset R$ is a height-$2$ ideal, $R$ satisfies $(R_1)$-condition (here, it suffices to recall that $R$ satisfies $(R_1)$-condition if and only if the conductor of the normalization map $R \to W(k)[[X,Y,Z]]$ is not contained in any height-$1$ prime ideal). Since $R/pR$ is weakly normal, one applies \cite[Corollary 4.1 in the excellent case]{BF13} or \cite[Proposition 4.11 in the general case]{Mu20} to conclude that $R$ is also weakly normal.
\end{example}

\begin{question}
We pose some questions.
\begin{itemize}
\item Can we remove the assumption that $(V,\pi,k)$ is unramified or $R \to \overline{R}$ is unramified in codimension $1$?
\item Can we formulate and prove the local Bertini theorem for the case when the residue class field is finite? 
\item Can we prove the local Bertini theorem for seminormality?
\end{itemize}
\end{question}

At this point, it is necessary to assume that $R \to \overline{R}$ is unramified in codimension $1$. This was previously studied in the paper \cite{CM81} 
as "$\WN1$-condition" ($=$seminormal+unramified in codimension $1$ for the normalization map), and then the authors of \cite{CGM83} used it to establish the global Bertini theorem for weak normality over an arbitrary algebraically closed field. On the other hand, the global Bertini theorem for weak normality in positive characteristic without $\WN1$-condition was refuted in \cite{CGM89}. This seems to suggest to us that an appropriate formulation for the local Bertini theorem for weak normality in positive characteristic requires the $\WN1$-condition in an essential way. In the finite residue field case, there is some recent work for the Bertini theorem over projective schemes; see the paper \cite{GK19}.

\begin{acknowledgement}
The authors would like to thank Prof. V. Trivedi for discussing local Bertini theorems. The first author wants to express his gratitude to Prof. N. Epstein for discussions on examples of weakly normal rings. The authors would like to thank Prof. K. Schwede for discussions on pull-backs. Our gratitude also goes to Dr. T. Murayama for a remark on the definition of seminormality. Finally, the authors thank the referee for going well beyond the due jobs and having provided various comments and constructive suggestions patiently. The second author was partially supported by JSPS Grant-in-Aid for Scientific Research(C) 18K03257.
\end{acknowledgement}

\end{document}